\documentclass{article}
\usepackage{amsthm}
\usepackage{amsfonts}
\usepackage{amssymb}
\usepackage{amsmath}
\usepackage{graphicx}
\usepackage{fullpage}
\usepackage{setspace}
\usepackage{natbib}

\setcounter{MaxMatrixCols}{10}

\date{February 22, 2016}

\newtheorem{theorem}{Theorem}

\newtheorem{lemma}{Lemma}

\newtheorem{proposition}{Proposition}

\linespread{2}

\renewcommand{\bar}[1]{\overline{#1}}

\begin{document}

\title{Group Incentives and Rational Voting \thanks{Earlier versions of this paper have been presented at the Midwest Political Science Association meeting (April 2012), the W. Allen Wallis Institute of Political Economy Conference (September 2012) and Warwick University's Political Economy Conference (March 2013). We gratefully acknowledge the useful comments of the audiences at these meetings and others. We are particularly grateful to Avidit Acharya, Scott Ashworth, Ethan Bueno de Mesquita, David Myatt, John Morgan, Barry Nalebuff, Scott Tyson and Srinivasa Varadhan. The research reported here was done while Tom LaGatta was a Courant Instructor \& PIRE Postdoctoral Fellow at the Courant Institute of Mathematical Sciences at NYU and his research and travel were supported in part by NSF PIRE Grant OISE-0730136.}}
\author{Alastair Smith\thanks{Alastair.Smith@nyu.edu} \and Bruce Bueno de Mesquita \thanks{bruce.buenodemesquita@nyu.edu} \and Tom LaGatta\thanks{Courant Institute, NYU and Splunk, Inc. lagatta@cims.nyu.edu;  tlagatta@splunk.com} }

\maketitle

\begin{doublespacing}

\begin{abstract}
Our model describes competition between groups driven by the choices of self-interested voters within groups. Within a Poisson voting environment, parties observe aggregate support from groups and can allocate prizes or punishments to them. In a tournament style analysis, the model characterizes how contingent allocation of prizes based on relative levels of support affects equilibrium voting behavior. In addition to standard notions of pivotality, voters influence the distribution of prizes across groups.  Such prize pivotality supports positive voter turnout even in non-competitive electoral settings. The analysis shows that competition for a prize awarded to the most supportive group is only stable when two groups actively support a party.  However, competition among groups to avoid punishment is stable in environments with any number of groups. We conclude by examining implications for endogenous group formation and how politicians structure the allocation of rewards and punishments.
\end{abstract}

\newpage

\section{Introduction}

To attain and retain power in an electoral setting politicians need to motivate their supporters to turn out and vote. While simply offering more or better rewards is one means to elicit support, we contend that politicians can do more with fewer resources by offering to allocate benefits across groups in a contingent manner-- a mechanism we refer to as a Contingent Prize Allocation Rule \citep{smith2012contingent}. As a simple illustrative example, a politician might offer to build a park (the prize) in the precinct (the group) that provides her with the most votes. 
Most rational choice explanations of voting examine pivotality and the extent to which an individual's vote is likely to influence who wins the election. In contrast, we contend that voters can be pivotal on other dimensions \citep{schwartz1987your}; and in particular we focus on the extent to which an individual voter shapes the distribution of prizes and punishments across groups. Hence we provide a link between individual rational choices at one level and the importance of groups in shaping political outcomes at another.  

Our approach is akin to the tournaments approach of \citet{lazear&rosen1981}. They examine how firms set wage schedules to incentivize the effort workers make by awarding a wage bonus to the most productive worker. However, within the political setting, simple wage competitions are more difficult to structure, not least because individual votes are anonymous. Vote buying occurs and patronage-style parties attempt to undermine the secret ballot. However, monitoring and rewarding each individual voter is expensive, time consuming and, empirically, appears the exception rather than the rule \citep{stokes2007political}. Instead, here we examine a setting where politicians observe political support (in terms of vote totals) at the group level (such as precincts, wards, or districts). Supportive groups are disproportionately rewarded, or alternatively, non-supportive groups are punished. Analogous to Lazear and Rosen's wage bonus for the most productive worker, we examine the implications of winner-take-all schemes that allocate a prize to the most supportive group. In addition, we model how punishing the least supportive group shapes the incentives of individuals within groups. 
In our analysis, group competition takes a pre-eminent role in shaping political outcomes although, and importantly, the power of these groups is derived by the actions of individual voters and their self-interested motivations.

We are not concerned here with comparing the properties of all possible reward or punishment mechanisms. Rather, our interest is to establish that contingent rules can significantly incentivize voting. Elsewhere we investigate the effects of a broader range of rules on group effort in political competition
(Smith and Bueno de Mesquita 2015).\nocite{bdmsmith2015effort} Here, we model the way that a winner-take-all tournament system influences voter turnout and voter incentive to form groups. We will see that politicians can exploit those incentives to mobilize turnout even when they face no credible political opposition. 

Winner-take-all is a useful starting place. The tournament literature which we apply assumes a winner-take-all environment and anecdotal evidence supports the idea that political parties routinely use such a mechanism. Stories of snow removal, for instance, in New York City and in Chicago's Democratic wards abound indicating that the most supportive neighborhoods are privileged. Likewise, as we discuss later, American political parties formalize the winner-take-all prize mechanism in awarding participation in their national nominating conventions. As Grossman and Helpman (2001, p. 226) observe, reward mechanisms need not include an explicit quid-pro-quo. In their analysis of rent-seeking by special interest groups (SIG), they contend,``Influence can be bought and sold by a subtle exchange in which both sides recognize what is expected of them. The SIG can make known by words and deeds that it supports 
politicians who are sympathetic to its cause. Then the policy maker can appear to be taking actions to promote a constituentÕs interests while gratefully accepting the groupÕs support."  The underlying logic requires that voters believe that politicians recognize and reward supportive groups. \nocite{GH_SIG2001}

We model the impact of rewarding the most supportive group within the contexts of Myerson's (1998, 2000) Poisson voting games. \nocite{myerson:1998:ijgt,myerson:2000:jet} This framework assumes there is a relatively large electorate with ambiguity in the precise number of voters, which is treated as a Poisson random variable. In common with much of the literature on rational voting, Myerson focuses on the extent to which individual voters are pivotal in determining the outcome of the election. The pathologies associated with such approaches are that turnout is predicted to be low and elections are nearly always close \citep{green/shapiro:1994}.  Contingent prizes create additional incentives to vote beyond simply affecting who wins; voters are also instrumental in shaping the distribution of the prizes allocated by parties. Such `prize pivotality' motivates voters to turn out even when voting is costly and the outcome is anticipated to be lopsided.  Similarly, 'punishment pivotality' induces individuals within groups to turnout to avoid a group punishment. Therefore, we focus on both carrots and sticks. 

Prize pivotality provides an incentive to support a candidate. By carefully crafting the competition for prizes, a politician makes a voter more influential over the distribution of group-oriented prizes and punishments than the voter is over which candidate wins the election. Just as a wage bonus induces workers to be more productive, the competition between groups for the prize increases electoral support. Further, such a boost in the incentive to vote does not require that an election is close. Indeed, to isolate the impact of prize competition we initially examine many of our results in the setting of non-competitive elections. 

Rewarding supportive groups is a standard practice within the party machines that have dominated many large US cities (Allen 1993). Richard J. Daley, the long term mayor of Chicago, was notorious in this respect (Rakove 1976). This is perhaps unsurprising since the internal rules of the Democratic Party of Cook County (which contains Chicago) specifies that on committees, ward representatives are given voting rights in proportion to the level of democratic votes their ward delivered in previous elections (Gosnell 1937). As Rakove reports, the Democratic Party shifts which groups receive rewards in response to changes in their level of support:  
``The machine co-opts those emerging leaders in the black and Spanish-speaking communities who are willing to cooperate; reallocates perquisites and prerogatives to the blacks and the Spanish Speaking, taking them from ethnic groups such as the Jews and Germans, who do not support the machine as loyally as their fathers did (Rakove 1976. p.16)."
\nocite{allen1993,gosnell1937,rakove1976,tam2003,newyorkpost}
US national parties also structure rules to reward their loyalists. For instance, both parties skew representation in presidential nominating conventions in favor of states that gave the party high levels of support in previous elections (see for instance, Democratic Party Headquarters 2007). 
\nocite{dph2007}
In other systems, punishments are more prominent. For instance, the People's Action Party of Singapore is notorious for cutting public housing and services to neighborhoods that fail to support it in elections (Tam 2003). Penalties also occur in US cities. For instance, after heavy snowfall in January 2014, The New York Post reported under the headline ``De Blasio 'getting back at us' by not plowing" (January 21, 2014) that ``It really is a tale of two cities Ñ this time with the tony Upper East Side getting the shaft!
Huge swaths of the cityÕs wealthiest neighborhood had been not been plowed by early Tuesday evening, leaving 1-percenters out in the cold, according to the cityÕs own map of snow-plower activity." That is, the neighborhood that had given the mayor little support in his election somehow got overlooked when it came to clearing the snow! Under the previous mayor, Michael Bloomberg, who was supported by Upper East Side (UES) voters, the UES had  been one of the first neighborhood to get plowed. 

The formal analysis of equilibrium behavior predicts that high turnout competition over prizes is only stable between two groups but competition to avoid group-based punishments is stable for any number of groups. We examine the implications of this Duvergerian style result \citep{duverger1959political,riker1982two} in the rewards setting. In particular, we discuss the incentives engendered for individuals to migrate between groups; in essence, altering their group identity. Finally we examine how politicians can increase their electoral support by breaking the competition for prizes into a series of smaller tournaments. When prizes are non-rival; that is, each group member's utility from a prize is undiminished by additional group members, politicians should optimally structure competition between two large groups. In contrast, when prizes are rival in nature (such as a cash transfer to the group), a politician engenders greater support by creating a large number of competitions between pairs of groups. For instance, when deciding which neighborhoods to snowplow first, a wily politician should pair off precincts or neighborhoods and plow the supportive neighborhoods first.

\section{Literature Review}
We consider a tournament style competition in which politicians offer group based rewards contingent on the relative number of votes delivered by each group. In the basic formulation of tournaments a firms offers differential wages based on the rank order of worker productivity \citep{lazear&rosen1981,rosen1986prizes,becker1992incentive}. One standard interpretation is that the most productive worker receives a promotion (for reviews of the tournament literature see \citet{connelly2014tournament} and \citet{prendergast1999provision}). By offering a prize to the most productive worker, firms motivate worker effort. We exploit an analogous approach in which parties offer group-based prizes to the groups that deliver the most votes. Although collective action problems persist because the group prize is essentially a public good to all members of the group \citep{palfrey1984participation}, such prizes fuel political participation because voters can have greater influence over the distribution of prizes than they have over who wins the election. 

Given that any voter has a nearly zero chance of influencing the electoral outcome and voting involves some cost in time and effort, it is for many a puzzle why there is turnout. Several different modeling strategies have been suggested to account for the reality of relatively high turnout in mass elections . For instance, \citet{evren2012altruism} and \citet{feddersen2006theory} introduce altruistic voters. \citet{HerreraMorelliPalfrey2014} and \citet{kartal2014comparative} examine the impact of electoral rules on turnout and \citet{borgers:2004} contrasts the welfare implications for endogenous versus compulsory voting.  Pivotality plays a central role in virtually all rational choice models of voting \citep{aldrich:1993,downs:1957,ferejohn/fiorina:1974,ledyard:1984,palfrey/rosenthal:1983,palfrey/rosenthal:1985,riker/ordeshook:1968}. In the basic rational actor voting model an individual's vote matters only if it turns a loss into a draw or a draw into a victory. In a large electorate, even if the outcome is expected to be close,
the probability that a voter's vote matters is extremely small, leading to the claims of turnout pathology within the rational voter framework (\citealt{green/shapiro:1994}; see \citealt{feddersen:2004} and \citealt{geys:2006:psr} for surveys of this literature).  

To model pivotal events in the context of this apparent turnout pathology with the number of voters known requires the analyst to work within the context of the binomial distribution. As this proves to be technically demanding, alternative approaches have been suggested. \citet{myerson:1998:ijgt} suggests inducing uncertainty about the precise number of voters within the population and modeling the number of votes for each party as a Poisson random variable. This approach greatly simplifies combinatoric calculations and is adopted here. Others, \citet{krishna2011overcoming} for instance, similarly exploit this approach. Another alternative approach utilized by \citet{good-mayer:1975:bs}, \citet{krishna2012majority} and \citet{myatt2012rational} is to introduce aggregate uncertainty over parameters in the model and examine the ratio of limit pivot probabilities as the electorate becomes large (see also \citet{chamberlain1981note}, \citet{ acharya2015sincere} and \citet{mandler2012fragility}). In a recent working paper, \citet{myattsmith2014} introduce aggregate certainty to a similar model to the one examined here and characterize the ratio of the likelihood that a voter is pivotal in who wins the election and the likelihood that a voter is pivotal in the allocation of prizes. 

\citet{morton1991} and \citet{uhlaner1989rational} argue that group membership shapes turnout due to rewards provided within groups.  Linking such group based rewards to the pivotality arguments of our paper, \citet{shachar1999follow} find that local political party leaders and groups work harder to mobilize voters in US presidential elections when the state level result is predicted to be close. That is to say, group leaders try harder when the election will be close in their state. Such arguments reflect the decision theoretic arguments of \citet{schwartz1987your}. He argues voters can be pivotal on many dimensions and majority support for the victorious party is a motivating factor at the local level. 

Our approach reflects pivotality concerns for voters beyond the outcome of the election. Other scholars similarly argue that voting is about more than simply who wins the immediate election.  \citet{castanheira2003victory}, \citet{meirowitz2009pivots} and \citet{razin2003signaling} explore how election results shape candidates' issue positions in future elections. Reminiscent of such signaling ideas, in \citet{myatt2012theory} voters want to signal their dislike of certain policies through a protest vote and in doing so tradeoff the probability that they are pivotal in delivering sufficient protest with the risk that their vote is pivotal in allowing an opposition party to win. \citet{myatt:2007} examines strategic voting in which voters who want to depose the incumbent must balance their preferences over opposition parties with the electoral prospects of these parties. In \citet{dewan/myatt:2007} it is party leaders who must tradeoff their desire to support their preferred candidate with the need to present a unified policy position to the voters.  

\section{Model Setup}

We assume an election takes place between two parties, $\mathcal{A}$ and $%
\mathcal{B}$, for a single office. All voters have the option of voting for
party $\mathcal{A}$, voting for party $\mathcal{B}$ or abstaining. Each
voter pays a cost $c$ to vote; abstention is free.  To begin, we assume there is a large number of voters,
divided into $K$ roughly equally sized groups. The groups are indexed $%
1,2,\dots ,K$. Although these groups might be based on any underlying
societal cleavage, for convenience, we treat them as though they are
geographically based wards within an electoral district.

Group $k$ has size $N_{k}$ which we treat as an unknown Poisson random
variable with mean $n_{k}$. Therefore, $\Pr (N_{k}=x)=f_{n_{k}}(x)=\frac{%
n_{k}^{x}}{x!}e^{-n_{k}}$ and $\Pr (N_{K}\leq x)=F_{n_{k}}(x)=\frac{\Gamma
(x+1,n_{k})}{x!}=\sum_{z=0}^{x}f_{n_{k}}(z)\ $where $\Gamma $ is the
incomplete gamma function. The total number of voters is $%
N_{T}=\sum_{k=1}^{K}N_{k}$, which, by the aggregation property of the
Poisson distribution \citep{johnson1992univariate}, is also a Poisson random
variable with mean $n_{T}=\sum_{k=1}^{K}n_{k}$. To avoid confusion, we
denote the expected size of the total population with a subscript $T$.

Let $p_{k}$ represent the average probability that members of group $k$ vote
for $\mathcal{A}$, and let $q_{k}$ represent the probability that members of
$k$ vote for party $\mathcal{B}$. By the decomposition property of the
Poisson distribution, $A_{k}$, the number of votes
for party $\mathcal{A}$ in group $k$, is a Poisson random variable with mean
$\lambda _{k}=p_{k}n_{k}$. Let $\gamma_k$  represent the corresponding expected votes for $\mathcal{B}$. We use the notation $(p,q)=((p_{1},q_{1}),\dots
,(p_{K},q_{K}))$ as the profile of vote probabilities. We denote the
profile of expected votes for parties $\mathcal{A}$ and $\mathcal{B}$ as $(\lambda ,\gamma
)=((\lambda _{1},\gamma _{1}),\dots ,(\lambda _{K},\gamma _{K}))$ and $%
(A,B)=((A_{1},B_{1}),\dots ,(A_{K},B_{K}))$ as the profile of actual votes.
Party $\mathcal{A}$ wins the election if it receives more votes than party $%
\mathcal{B}$ ($\sum_{k=1}^{K}A_{k}>\sum_{k=1}^{K}B_{k}$); ties are resolved
by a coin flip. We characterize profiles of vote
probabilities that can be supported in Nash equilibrium, show how these
equilibria vary within a winner-take all environment and examine the
incentives this creates for group formation and maintenance.

Voters care both about policy benefits and any potential prizes or punishments the parties
distribute. With regard to policy benefits, voter $i$ receives a policy
reward of $\varepsilon_{i}$ if party $\mathcal{A}$ wins the election and a
policy payoff of $0$ if $\mathcal{B}$ wins. The random variable $\varepsilon
_{i}$ represents individual $i$'s private evaluation of party $\mathcal{A}$
relative to party $\mathcal{B}$. We assume the individual evaluations are
independently identically distributed with distribution $\Pr (\varepsilon
_{i}<r)=G(r)$. As a preview, equilibria are characterized by thresholds, $\tau_{Ak}$ for instance, such that voter $i$ in group $k$ votes for 
$\mathcal{A}$ if  $\varepsilon_{i}>\tau_{Ak}$ and $p_{k}=1-G(\tau_{Ak})$.

In addition to personal policy gains, individuals care about the benefits or punishments
that parties might provide to their group. The concern
here is with allocation mechanisms rather than on what is being
allocated. Hence, rather than work with the litany of titles for benefits we
simply refer to all preferential rewards as \emph{prizes}, $\zeta $ and all group specific punishments as \emph{penalties}, $\chi$. When
necessary, we label the value of these prizes and penalties as $\zeta _{A}$, $\zeta _{B}$, $\chi_{A}$ and $\chi_{B}$
according to which party hands them out. What is essential for our model is
that parties can observe the level of political support from each group and
that there exists a means of preferentially rewarding or punishing groups.

We focus primarily on prizes and explore the non-rival versus rival nature of prizes. As it happens, this
factor influences the optimal division of society into groups from the
perspective of political parties and citizens. Although in practice all
policies have private and public goods components, we contrast the limiting
cases. We treat a prize as a non-rival local public good (or a pure club good)
if its cost of provision is unrelated to the size of the group that benefits
from it. We refer to this first case, where the marginal cost of increasing group size is zero, as a
non-rival prize. Prizes based on private goods are rival and they have a
constant marginal cost of providing the prize as group size increases.
However, until we examine the relative cost of prize provisions under
different arrangements of groups, the essential point is that the members of
the group to which the prize is allocated get benefits worth $\zeta $. With
this setup in mind, we explore how parties can condition their distribution
of prizes on the vote outcome $(A,B)$.

We refer to the mechanism parties use to distribute rewards as Contingent
Prize Allocation Rules (CPAR). Let $GA_{k}(A,B)$ be the probability that party $\mathcal{A}$ awards the prize to group $k$ if the vote profile
is $(A,B)$. Although we develop the logic of our arguments with respect to
party $\mathcal{A}$, throughout there are parallel considerations with
respect to party $\mathcal{B}$. That is, each party allocates prizes. Although there are many plausible CPARs, we focus here on the common Winner-Takes-All (WTA) Rule. \citet{bdmsmith2015effort} examine a broader class of CPARs in a related setting. 
Under this rule party $\mathcal{A}$ rewards the
most supportive group (or groups). Other groups receive nothing.
\begin{equation*}
GA_{k}(A,B)=\left\{
\begin{array}{ccc}
1 & if & A_{k}=\max \{A_{1},\dots ,A_{K}\}>0 \\
0 &  & otherwise%
\end{array}%
\right.
\end{equation*}
Note that in event of a tie for most supportive group, we assume both (or more) groups receive a prize.  We assume that party $\mathcal{A}$ punishes group $k$ if and only if it is the unique least supportive group:  
\begin{equation*}
HA_{k}(A,B)=\left\{
\begin{array}{ccc}
1 & if & A_{k}<\min \{A_{1},\dots , A_{k-1},A_{k+1},\dots,A_{K}\} \\ 
0 &  & otherwise%
\end{array}%
\right.
\end{equation*}

\section{Pivotality and Voting}

 The standard concept of voter pivotality is the likelihood of shifting
the outcome of an election from one party to another. We refer to this as the outcome
pivot, $OP_{A}$, which is defined formally below. Voters can also be
 pivotal in terms of the distribution of the prize or punishment. That
is, by voting for party $\mathcal{A}$, a voter may not only increase the
likelihood that party $\mathcal{A}$ wins; she also increases the probability
that her group will be the most supportive group and reduces the probability that her group is the least supportive group.  We refer to the likelihood of being pivotal in terms of prize allocation as the Prize Pivot, $PP_{A}$ and the likelihood of being pivotal in terms of punishment allocation as the Penalty Pivot, $QP_{A}$. In all
cases we define analogous terms with respect to party $\mathcal{B}$.

Following from the \textit{environmental equivalence} result of Myerson (1998, Theorem 2), from the perspective of each member of group $k$, the other $N_{k}-1$ members of $k$ can also be assumed to be Poisson distributed with mean $n_{k}$. This feature makes the Poisson framework especially
attractive for modeling pivotality  as the voter's and analyst's assessment of other voters coincide. 

The proposition below provides a definition and calculation of Outcome Pivot, $OP_{A}$.
 Given vote probability profile $%
(p,q)$, the number of votes for party $\mathcal{A}$ in district $k$ is a
Poisson random variable with mean $\lambda_{k}=p_{k}n_{k}$ and the total
number of votes for $\mathcal{A}$ is also a Poisson random variable $A\ $%
with mean $n_{T}p=\sum_{j=1}^{K}p_{j}n_{j}$, where $n_{T}=\sum%
\limits_{j=1}^{K}n_{j}$ and $p$ is the weighted average probability of
voting for $\mathcal{A}$. Analogously, the total number of votes for party $%
\mathcal{B}$ is $B$, a Poisson random variable with mean $%
n_{T}q=\sum_{j=1}^{K}q_{j}n_{j}$. Given the well-known result that an
individual's vote only influences who wins if it breaks a tie or turns a
loss into a draw, the
proposition below defines and characterizes $OP_{A}$.\footnote{$OP_B$ is analogously defined as $
OP_{B} =\Pr (\mbox{$\mathcal B$ wins $|$ voter $i$ abstains})-\Pr (\mbox{$\mathcal B$ wins $|$ voter $i$ votes $\mathcal B$})< 0$.}

\begin{proposition}
\label{OPA_define} Given the vote probability profile $(p,q)$,
\begin{eqnarray}
OP_{A} &=&\Pr (\mbox{$\mathcal A$ wins $|$ voter $i$ votes $\mathcal A$}%
)-\Pr (\mbox{$\mathcal A$ wins $|$ voter $i$ abstains})  \label{OPA} \\
&=&\frac{1}{2}\Pr (A=B)+\frac{1}{2}\Pr (A=B-1)  \notag \\
&=&e^{-n_{T}(p+q)}\frac{1}{2}(I_{0}(2n_{T}\sqrt{pq})+(\frac{q}{p})^{\frac{1}{%
2}}I_{1}(2n_{T}\sqrt{pq}))\notag 
\end{eqnarray}

where $A=\sum_{k=1}^{K}A_{k}$, \ $B=\sum_{k=1}^{K}B_{k}$ and $I_{m}(x)$ is
the modified Bessel function of the first kind.
\end{proposition}

\begin{proof}
From \citet{skellam1946frequency}, if $A$ and $B$ are Poisson random variables with means $%
n_{T}p$ and $n_{T}q$ respectively, then $Sk(n_{T}p,n_{T}q,m)=\Pr
(A-B=m)=\allowbreak e^{-(n_{T}p+n_{T}q)}(\frac{n_{T}p}{n_{T}q})^{\frac{m}{2}%
}I_{|m|}(2n_{T}\sqrt{pq})$, where $I_{m}$ is the modified Bessel function of
the first kind. The function $Sk$ is the Skellam distribution with
parameters $n_{T}p$ and $n_{T}q$. Therefore $OP_{A}$ is simply the average
of the Skellam distribution evaluated at $m=0$ and $m=-1$. So $%
OP_{A}=\allowbreak e^{-n_{T}(p+q)}((\frac{p}{q})^{\frac{0}{2}}\frac{1}{2}%
(I_{0}(2n_{T}\sqrt{pq})+(\frac{q}{p})^{\frac{1}{2}}I_{1}(2n_{T}\sqrt{pq}))$.
The Outcome Pivot with respect to voting for party $\mathcal{B}$ is
analogously defined, $OP_{B}$.
\end{proof}

Voters not only affect which party wins but also the distribution of prizes.
Prize Pivot, $PP_{A,k}(p,q)$, refers to the change in the probability that party $\mathcal{A}$ allocates the prize to group $k$ if a member of $k$ votes
for $\mathcal{A}$ rather than abstains and the vote probability profile is $%
(p,q)$. To simplify notation we generally omit the profile $(p,q)$.

\begin{proposition}
\label{PPA_define} The prize pivot, $PP_{A,k}$, and the penalty pivot, $QP_{A,k}$, for any individual voter in
group $k$ are:
\begin{eqnarray}
PP_{A,k} &=&Pr(\mbox{Prize}|\mbox{vote}\mathcal{A})-Pr(\mbox{Prize}|%
\mbox{abstain})  \label{PPAdefine} \\
&=& \sum_{a=0}^{\infty }f_{n_{k}p_{k}}(a)\left( \prod\limits_{j\neq
k}F_{n_{j}p_{j}}(a+1)-\prod\limits_{j\neq k}F_{n_{j}p_{j}}(a)\right) \geq 0 \notag
\end{eqnarray}
\begin{eqnarray}
QP_{A,k} &=&Pr(\mbox{Punishment}|\mbox{vote}\mathcal{A})-Pr(\mbox{Punishment}|%
\mbox{abstain})  \label{QPAdefine} \\
&=& \sum_{a=0}^{\infty }f_{p _{k}n_k}(a)\left( \
\prod\limits_{j\neq k}(1-F_{p _{j}n_j}(a+1))-\prod\limits_{j\neq
k}(1-F_{p_{j}n_j}(a))\right) \leq 0 \notag
\end{eqnarray}
\end{proposition}

\begin{proof}
Suppose $A_{k}=a_{k}$. If a voter in group $k$ abstains then her group
receives the prize $\zeta $ if $a_{k}\geq \max \{A_{j\neq k}\}$. Since $%
A_{j} $ is Poisson distributed with mean $n_{j}p_{j}$, $\Pr (a_{k}\leq
A_{j})=F_{n_{j}p_{j}}(a_{k})$ and the probability that $a_{k}$ is the
maximum of all groups' support for $\mathcal{A}$ is $\prod\limits_{j\neq
k}\Pr (A_{j}\leq a_{k})=\prod\limits_{j\neq k}F_{n_{j}p_{j}}(a_{k})$. Since $%
A_{k}$ is Poisson distributed, group $k$'s probability of receiving the prize if the voter
abstains is $\sum_{a=0}^{\infty }f_{n_{k}p_{k}}(a)\prod\limits_{j\neq
k}F_{n_{j}p_{j}}(a)$.

If the voter votes for $\mathcal{A}$, then $\Pr (a_{k}+1\geq \max \{A_{j\neq
k}\})=\prod\limits_{j\neq k}\Pr (A_{j}\leq 1+a_{k})=\prod\limits_{j\neq k}F_{n_{j}p_{j}}(a_{k}+1)$ and the probability $k$ receives the prize is 
$\sum_{a=0}^{\infty }f_{n_{k}p_{k}}(a)\prod\limits_{j\neq
k}F_{n_{j}p_{j}}(a+1)$. Therefore $PP_{A,k}= \sum_{a=0}^{\infty
}f_{n_{k}p_{k}}(a)(\prod\limits_{j\neq
k}F_{n_{j}p_{j}}(a+1)-\prod\limits_{j\neq k}F_{n_{j}p_{j}}(a))$. Further, $%
PP_{A,k}$ is continuous in all components of $(p,q)$ because the underlying
Poisson distributions are continuous.

The derivation of the punishment pivot is analogous. If $A_{k}=a$ and the voter votes for $\mathcal{A}$, then group $k$ is punished if and only if $a+1<\min (A_{j\neq k})$, which happens with probability $\prod\limits_{j\neq k}(1-F_{p_{j}n_{j}}(a+1))$. If the voter abstains, then group $k$ is punished with probability $\prod\limits_{j\neq k}(1-F_{p_{j}n_{j}}(a))$. Summing over all possible $A_k$'s produces equation \ref{QPAdefine}.
\end{proof}

The model assumes parties distribute prizes/punishment whether they win or lose the election. This might be a reasonable assumption in a federal system or if the prizes are access to party level resources. In other settings, parties in office might deliver larger prizes than those parties excluded from access to government resources. Prize pivots are more complicated in such settings.\footnote{Although prize pivots vary if only victorious parties issue prizes, in many important cases the distinctions are easily handled. First, in non-competitive equilibria, those in which the expected vote share for one party is much higher than the other, one party is virtually certain to win and the two concepts of prize pivot converge. Second, in symmetric cases of competitive elections, each party is equally likely to win and so the prize pivot would be half the value calculated here.}  

Next we characterize how pivot probabilities differ across groups as a function of the expected turnouts $\lambda_{1}, \dots , \lambda_{K}$. We say group $i$ is $\mathcal{A}$-active if $\lambda_i>0$ and let $W_{A}=\{i\in \{1,..,K\}:p_{i}>0\}$ represent the set of such groups.
\begin{proposition}
\label{useful_lemma} \begin{eqnarray}
\Delta P&=&PP_{A,j}-PP_{A,k}  \label{DeltaP} \\
&=& \sum_{a=0}^{\infty }([f_{\lambda _{j}}(a)f_{\lambda
_{k}}(a+1)-f_{\lambda _{_{k}}}(a)f_{\lambda _{j}}(a+1)]\prod\limits_{i\neq
j,k}F_{\lambda _{i}}(a))  \notag \\
&&+ \sum_{a=0}^{\infty }((f_{\lambda _{j}}(a)f_{\lambda
_{k}}(a+1)-f_{\lambda _{k}}(a)f_{\lambda _{j}}(a+1))\prod\limits_{i\neq
j,k}f_{\lambda _{i}}(a+1))  \notag \\
&&+ \sum_{a=0}^{\infty }([f_{\lambda _{j}}(a)F_{\lambda
_{k}}(a)-f_{\lambda _{k}}(a)F_{\lambda _{j}}(a)]\prod\limits_{i\neq
j,k}f_{\lambda _{i}}(a+1))  \notag
\end{eqnarray}
Further, if there are only two groups with positive turnout, then the group with the smaller expected turnout has the larger pivot probability (if $\lambda_i=0$ for all $i\neq j,k$ and $\lambda _{j}>\lambda
_{k} $, then $PP_{A,j}<PP_{A,k}$).
\end{proposition}

\begin{proof}
Let $M(a)$ represent the distribution of the greatest number of votes for $%
\mathcal{A}$ by any groups other than $j$ or $k$: $M(a)=\Pr (Max_{i\neq
j,k}\{A_{i}\}\leq a)=\prod\limits_{i\neq j,k}F_{\lambda_{i}}(a)$. Let $m(a)$
be the associated probability mass function. Note that, if $n_{i}p_{i}=0$,
then $M(a)=1$ and $m(a)=0$ for all $a>0$.

Noting that $M(a+1)=M(a)+m(a+1)$ and $F(a+1)=F(a)+f(a+1)$, the prize pivot
for group $j$ can be written as,

\begin{eqnarray}
PP_{A,j} &=& \sum_{a=0}^{\infty }f_{\lambda _{j}}(a)[M(a+1)F_{\lambda
_{k}}(a+1)-M(a)F_{\lambda _{k}}(a)]  \label{pp_general} \\
&=& \sum_{a=0}^{\infty }f_{\lambda _{j}}(a)[(M(a)+m(a+1))(F_{\lambda
_{k}}(a)+f_{\lambda _{k}}(a+1))-M(a)F_{\lambda _{k}}(a)]  \notag \\
&=& \sum_{a=0}^{\infty }f_{\lambda _{j}}(a)\left( M(a)f_{\lambda
_{k}}(a+1)+m(a+1)F_{\lambda _{k}}(a)+f_{\lambda _{K}}(a+1)m(a+1)\right)
\notag
\end{eqnarray}

The difference between $PP_{A,j}$ and $PP_{A,k}$ is:
\begin{eqnarray}
\Delta P &=&PP_{A,j}-PP_{A,k}  \label{DELTA} \\
&=& \sum_{a=0}^{\infty }M(a)[f_{\lambda _{j}}(a)f_{\lambda
_{k}}(a+1)-f_{\lambda _{_{k}}}(a)f_{\lambda _{j}}(a+1)]  \notag \\
&&+ \sum_{a=0}^{\infty }m(a+1)(f_{\lambda _{j}}(a)f_{\lambda
_{k}}(a+1)-f_{\lambda _{k}}(a)f_{\lambda _{j}}(a+1))  \notag \\
&&+ \sum_{a=0}^{\infty }m(a+1)[f_{\lambda _{j}}(a)F_{\lambda
_{k}}(a)-f_{\lambda _{k}}(a)F_{\lambda _{j}}(a)]  \notag
\end{eqnarray}

The first term and second terms contain
\begin{equation*}
\lbrack f_{\lambda_{j}}(a)f_{\lambda_{k}}(a+1)-f_{\lambda
_{_{k}}}(a)f_{\lambda_{j}}(a+1)]=\frac{\lambda_{j}^{a}\lambda
_{k}^{a}e^{-\lambda_{k}}e^{-\lambda_{j}}}{a!a!}(\lambda_{k}-\lambda _{j})<0
\end{equation*}

Hence if $M(0)=1$ then $\Delta <0$.

The third term of $\Delta $ contains the expression $[f_{\lambda
_{j}}(a)(F_{\lambda_{k}}(a))-f_{\lambda_{k}}(a)(F_{\lambda_{j}}(a))]$ which
can be written as

\begin{eqnarray}
&&\frac{\lambda_{j}^{a}e^{-\lambda_{j}}}{a!}\sum_{x=0}^{a}\frac{\lambda
_{k}^{x}e^{-\lambda_{k}}}{x!}-\frac{\lambda_{k}^{a}e^{-\lambda_{k}}}{a!}%
\sum_{x=0}^{a}\frac{\lambda_{j}^{x}e^{-\lambda_{j}}}{x!}  \notag \\
&=&\frac{e^{-\lambda_{j}}e^{-\lambda_{k}}}{a!}\sum_{x=0}^{a}\frac{\lambda
_{j}^{x}\lambda_{k}^{x}}{x!}(\lambda_{j}^{a-x}-\lambda_{k}^{a-x})
\end{eqnarray}

Since $(\lambda_{j}^{a-x}-\lambda_{k}^{a-x})>0$, the third term of $\Delta $
is positive, so $\Delta $ cannot be definitively signed if $M(0)<1$.
Substitution of $M(a)$ and $m(a)$ into equation \ref{DELTA} produces
expression \ref{DeltaP}.
\end{proof}

The analogous result in terms of penalty pivots is: 	
\begin{proposition} \label{prop:deltaQP} If all groups have positive turnout, then the ordering of magnitudes of the penalty pivots of the groups are opposite to the ordering of the expected turnouts of the groups: $\lambda _{i}>0$ for all $i\in K$ and $\lambda _{j}>\lambda _{k}$, then $|QP_{A,k}|>|QP_{A,j}|$.
	\end{proposition} 
\begin{proof}	
If a representative voter in $j$ abstains, then group $j$ is punished if $A_{j}<\min \{A_{i}$ for all $i\neq k\}$. If the voter votes for $\mathcal{A}$, then group $j$ is punished if $A_{j}+1<\min \{A_{i}$ for all $i\neq k\}$. 
Let $R(a)=\Pr (\min \{A_{i}|i\neq j,k\}>a)=\prod\limits_{i\neq k,j}(1-F_{\lambda _{i}}(a))$ be the probability that all groups outside of $j$ and 
$k$ produce more than $a$ votes for party $\mathcal{A}$ and $R(a+1)=\Pr (\min \{A_{i}|i\neq j,k\}>a+1)=\prod\limits_{i\neq k,j}(1-F_{\lambda _{i}}(a+1))$. 
Further let $r(a+1)=R(a+1)-R(a) < 0$.
	
Utilizing that $f_{\lambda _{k}}(a+1)=f_{\lambda _{k}}(a)\frac{\lambda _{k}}{a+1}$ we write 
\begin{eqnarray*}
QP_{A,j}&=&\sum_{a=0}^{\infty }f_{\lambda _{j}}(a)\left[ (1-F_{\lambda _{k}}(a+1))R(a+1)-(1-F_{\lambda _{k}}(a))R(a)\right] \\
&=& \sum_{a=0}^{\infty }f_{\lambda _{j}}(a)\left[ r(a+1)(1-F_{\lambda _{k}}(a)-f_{k}(a)\frac{\lambda _{k}}{a+1})-f_{k}(a)\frac{\lambda _{k}}{a+1}R(a)\right] 
\end{eqnarray*}
Therefore 
\begin{eqnarray*}
\Delta Q&=&QP_{A,j}-QP_{A,k} \\
&=&\sum_{a=0}^{\infty }R(a)\frac{f_{\lambda _{j}}(a)f_{\lambda _{k}}(a)}{a+1}(\lambda _{j}-\lambda _{k})+\sum_{a=0}^{\infty }r(a+1)\left[ f_{\lambda _{j}}(a)(1-F_{\lambda _{k}}(a+1))-f_{\lambda _{k}}(a)(1-F_{\lambda _{j}}(a+1))\right] 
\end{eqnarray*}	
Since $\lambda _{j} > \lambda _{k}$, the first summation is positive. Using the substitution that $\lambda _{j}=\rho \lambda _{k}$ and $(1-F_{\lambda _{k}}(a+1))=\sum_{x=a+1}^{\infty }\frac{e^{-\lambda _{k}}\lambda _{k}^{x}}{x!}$, the term  $f_{\lambda _{j}}(a)(1-F_{\lambda _{k}}(a+1))-f_{\lambda _{k}}(a)(1-F_{\lambda _{j}}(a+1))$ can be written as $e^{-\lambda _{j}-\lambda _{k}}\sum_{x=a+1}^{\infty }\frac{\lambda _{k}^{a+x}}{a!x!}(\rho ^{a}-\rho ^{x})$. Since $\rho>1$ and $r(a+1) < 0 $, the second summation is also positive. Therefore, $\Delta Q<0$.
\end{proof}

In standard pivotal voting games without prizes there is an underdog effect \citep{kartal2014comparative}.  As \citet{taylor2010unified} show, when there is costly voting the minority group turns out at a higher rate; but it still tends to lose to the majority. It is worth noting that \citet{campbell1999}  and \citet{krishna2011overcoming} show that if a minority has greater salience for an issue they can overcome the majority. However, absent such a systematic bias in salience, the underdog effect predicts that small groups try harder, although they are still likely to lose. We observe a similar pattern with respect to incentives created by prize and penalty pivots. 

Provided that there are not more that 2 $\mathcal{A}$-active groups, proposition \ref{useful_lemma} states that the group with the lower turnout has the higher prize pivot --the underdog effect. Proposition \ref{prop:deltaQP} exhibits a similar underdog effect with respect to penalty pivots; the magnitude of the penalty pivot is largest for the group with the smallest expected turnout.\footnote{Note that if $\lambda_i=0$ for some group, then $QP_{A,j}=0$ for all $j \neq i$.} There is a negative feedback with respect to prize and penalty motivations. As a group increases its expected turnout, that group's influence over the distribution of prizes and penalties diminishes and such negative feedback reduces the incentive to further increase turnout and creates stability. The stability induced by the underdog persists for any number of groups (so long as they are all $\mathcal{A}$-active) for penalty pivots. However, prize pivots are unstable with more than 2 $\mathcal{A}$-active groups. 

Proposition \ref{useful_lemma} shows that if $\lambda _{j}>\lambda
_{k} $, then $PP_{A,j}<PP_{A,k}$. However the result only holds if $\lambda_i=0$ for all other groups. As the following example illustrates, when there are 3 or more $\mathcal{A}$-active groups, stability induced by the underdog effect breaks down and there is positive feedback from increasing group turnout. It is worth noting that the tournament between a large number of employees by \citet{rosen1986prizes} is constructed as a series of binary contests.

Figure~\ref{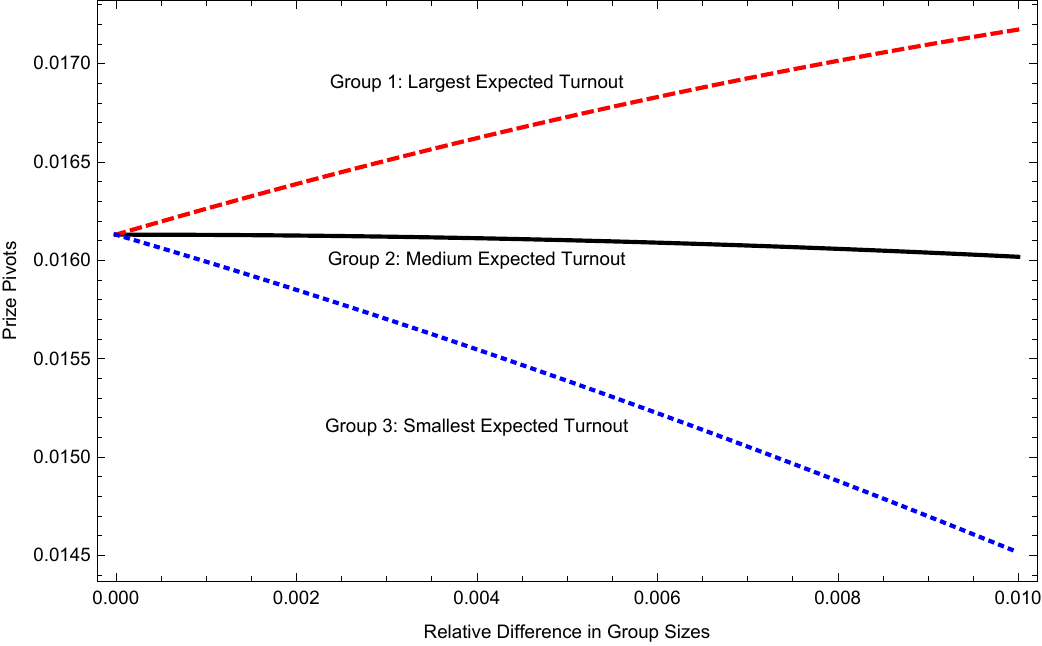} examines the case of three $\mathcal{A}$-active groups and plots the prize pivot for each group as asymmetry in group turnout is introduced. In the figure group 2 has
constant turnout of 1000 voters, $\lambda _{2}=1000$ and the figure plots
the prize pivot for each of the three groups as turnout in group 1 increases
above $1000$ and turnout in group 3 decreases below 1000. In particular, the
figure is constructed assuming that $\lambda _{1}=(1+\delta )\lambda _{2}$
and $\lambda _{3}=(1-\delta )\lambda _{2}$. The parameter $\delta $
represents the degree of asymmetry between expected group turnouts. On the
left hand side of the figure ($\delta =0$); all the groups have the same
expected turnout and hence the same prize pivot. On the right hand side of
the figure, group 1's turnout is 1\% larger than group 2's, which in turn is
about 1\% larger than group 3's ($\delta =.01$). As asymmetry in expected
turnout increases, prize pivots diverge, and the group with the smallest expected turnout has the smallest prize pivot. To illustrate the logic behind this result consider the conditions that make the smallest group, 3 in the example, pivotal. In rough terms, to be pivotal group 3 members need to match the votes of group 2 and have more votes than group 1, or match group 1 and beat group 2. Proposition \ref{useful_lemma}  tells us that with respect to matching the votes of group 2, members of group 3 are more pivotal than members of group 2. However, in the three group case, the probability of generating more votes than the larger group 1 must also be taken into account; and since group 3 is the smallest, this factor reduces the prize pivotality of group 3. 

Figure~\ref{FigAsym.pdf} about here

Although figure~\ref{FigAsym.pdf} is only an illustrative example, it reflects the positive feedback induced in prize pivots when there are three or more $\mathcal{A}$-active groups. We examine this instability more systematically in the appendix.  As the number of voters grows large, there are simple approximations for the pivots, as we next examine. 

\subsection{Asymptotic Approximations of Pivots}

Outcome and prize pivots both arise and are of interest in settings with
very large numbers of prospective voters. Therefore we use asymptotic
approximations of the Bessel function to generate reliable estimates of
pivot probabilities. To derive these approximations we assume the expected
number of voters, $n_{k}p_{k}$, is relatively large.

As the expected number of voters converges toward infinity, the
approximations converge to their true values. We indicate the accuracy of
these approximations in finite populations.

\begin{proposition}
\label{pro_OP} Outcome Pivot: $OP_{A}$: If $n_{T}=\sum_{k=1}^{K}n_{k}$, $p=%
\frac{1}{n_{T}}\sum_{k=1}^{K}p_{k}n_{k}$ and $q=\frac{1}{n_{T}}%
\sum_{k=1}^{K}q_{k}n_{k}$ then
\begin{equation}
OP_{A}\sim \widetilde{OP}_{A}=\frac{1}{2\sqrt[4]{pq}\sqrt{\pi n_{T}}}\frac{%
\sqrt{p}+\sqrt{q}}{2\sqrt{p}}\cdot e^{-n_{T}\left( \sqrt{p}-\sqrt{q}\right)
^{2}}  \label{OPAapprox}
\end{equation}
\end{proposition}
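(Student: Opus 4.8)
The plan is to start from the exact expression for $OP_{A}$ established in Proposition 1 and insert the classical large-argument asymptotics of the modified Bessel functions. In Skellam form the outcome pivot reads
\[
OP_{A}=\frac{1}{2}e^{-n_{T}(p+q)}\left( I_{0}(2n_{T}\sqrt{pq})+\left( \frac{q}{p}\right) ^{1/2}I_{1}(2n_{T}\sqrt{pq})\right),
\]
a combination of $I_{0}$ and $I_{1}$ with positive coefficients, evaluated at the argument $z=2n_{T}\sqrt{pq}$. Under the standing assumption that the expected number of voters grows large (with $p,q$ bounded away from $0$), $z\to\infty$.

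The key input is the standard asymptotic $I_{\nu}(z)\sim e^{z}/\sqrt{2\pi z}$ as $z\to\infty$, which holds for every fixed order $\nu$; the subleading corrections depend on $\nu$ but are only of order $1/z$, so $I_{0}$ and $I_{1}$ share the same leading term. Substituting $z=2n_{T}\sqrt{pq}$ yields
\[
I_{m}(2n_{T}\sqrt{pq})\sim \frac{e^{2n_{T}\sqrt{pq}}}{2\sqrt{\pi n_{T}}\,(pq)^{1/4}},\qquad m=0,1 .
\]
Since both bracketed coefficients are positive there is no cancellation, so the weighted sum inherits this leading behaviour and I would factor it out front.

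It then remains to simplify algebraically. The exponentials combine through the identity $e^{-n_{T}(p+q)}e^{2n_{T}\sqrt{pq}}=e^{-n_{T}(p-2\sqrt{pq}+q)}=e^{-n_{T}(\sqrt{p}-\sqrt{q})^{2}}$; the Bessel prefactor $1/(2\sqrt{\pi n_{T}}\,(pq)^{1/4})$ is exactly $1/(2\sqrt[4]{pq}\sqrt{\pi n_{T}})$; and the bracket becomes $\frac{1}{2}(1+(q/p)^{1/2})=(\sqrt{p}+\sqrt{q})/(2\sqrt{p})$. Collecting these three factors reproduces $\widetilde{OP}_{A}$ verbatim, and the $p=q$ case is covered uniformly, since then the exponent vanishes and the expression reduces to $1/(2\sqrt{p}\sqrt{\pi n_{T}})$.

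The point requiring care is that $\sim$ means the ratio tends to $1$, not that the difference tends to $0$. This is in fact what makes the argument clean: the Bessel estimate is itself a ratio statement, so forming $OP_{A}/\widetilde{OP}_{A}$ and using $I_{m}(z)/(e^{z}/\sqrt{2\pi z})\to 1$ gives the claim directly. I expect the only modest obstacle is confirming that the $\nu$-dependent $O(1/z)$ Bessel corrections, which differ between $I_{0}$ and $I_{1}$, are harmless; they vanish relative to the leading term as $n_{T}\to\infty$ and so cannot affect the limiting ratio, with positivity of the coefficients guaranteeing that no leading-order cancellation occurs.
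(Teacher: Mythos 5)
Your proposal is correct and takes essentially the same approach as the paper: both start from the exact Skellam/Bessel form of $OP_{A}$ in Proposition 1, substitute the leading term of the large-argument asymptotic $I_{|m|}(x)\sim e^{x}/\sqrt{2\pi x}$ (the paper cites Abramowitz and Stegun for the full expansion and keeps only the first term), and then combine the exponentials via $p+q-2\sqrt{pq}=(\sqrt{p}-\sqrt{q})^{2}$ to obtain $\widetilde{OP}_{A}$. Your write-up is in fact slightly more careful than the paper's sketch, since you explicitly note that $\sim$ is a ratio statement, that the order-dependent $O(1/x)$ corrections are harmless, and that positivity of the coefficients rules out leading-order cancellation.
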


This is the same approximation used by Myerson (1998), so we provide only a
brief sketch. As derived above, the difference between the vote for $%
\mathcal{A}$ and $\mathcal{B}$ is Skellam distributed: $%
OP_{A}=e^{-n_{T}(p+q)}((\frac{p}{q})^{\frac{m}{2}}\frac{1}{2}(I_{0}(2n\sqrt{%
pq})+(\frac{q}{p})^{\frac{1}{2}}I_{1}(2n_{T}\sqrt{pq}))$. The modified
Bessel function of the first kind, $I_{m}(x)$ is a well known mathematical
function that for fixed $m$ and large $x$ is well approximated (\citealt{abramowitz1965handbook} p. 377:)
\begin{eqnarray*}
I_{|m|}(x) &\sim &\frac{e^{x}}{\sqrt{2\pi x}}(1-\frac{4m^{2}-1}{8x}+\frac{%
(4m^{2}-1)(4m^{2}-9)}{2!(8x)^{2}} \\
&&-\frac{(4m^{2}-1)(4m^{2}-9)(4m^{2}-25)}{3!(8x)^{3}}+\dots )
\end{eqnarray*}

We use the first term of this approximation $I_{|m|}(x)\sim \frac{e^{x}}{%
\sqrt{2\pi x}}$ and equation \ref{OPAapprox} follows directly. To check the
accuracy we evaluate $(I_{m}(x)-\frac{e^{x}}{\sqrt{2\pi x}})/I_{m}(x)$ for $%
|m|=0,1$. Ninety-nine percent accuracy is attained when $x>38.2$. The
approximation improves as $x$ increases. For large populations the outcome pivot estimates are accurate. For
instance, if the population mean is $n_{T}=100,000$ and voters support
parties $\mathcal{A}$ and $\mathcal{B}$ with probability $p=.5$ and $q=.5$,
then the approximation error for the outcome pivot is around .0001\%.

Asymptotic approximations of prize pivots exist. The proposition below characterizes an approximation
of $PP_{A,1}$ when there are two $\mathcal{A}$-active groups. We provide an online appendix that
characterizes a series of approximations of prize pivots for three or more $%
\mathcal{A}$ -active groups, the simplest of which relies upon the expansion
of the products in equation \ref{PPAdefine}, a Gaussian approximation
of the Poisson distribution and Laplace's method of integration.

\begin{proposition}
\label{pro_PP} Prize Pivot and Penalty Pivot Approximation:

If there are $2$ $\mathcal{A}$-active groups with expected votes for $%
\mathcal{A}$ of $\lambda _{1}$ and $\lambda _{2}$, then as $\lambda
_{i}\rightarrow \infty $, 
\begin{equation*} PP_{A,1}\sim \frac{1}{2\sqrt{\pi }\sqrt[4]{%
\lambda _{1}\lambda _{2}}}\frac{\sqrt{\lambda _{2}}}{\sqrt{\lambda _{1}}}%
e^{-(\sqrt{\lambda _{1}}-\sqrt{\lambda _{2}})^{2}}
\end{equation*}

If there are only $2$ groups with expected votes for $%
\mathcal{A}$ of $\lambda _{1}$ and $\lambda _{2}$, then as $\lambda
_{i}\rightarrow \infty $, 
\begin{equation*} QP_{A,1}\sim -\frac{1}{2\sqrt{\pi }\sqrt[4]{%
\lambda _{1}\lambda _{2}}}\frac{\sqrt{\lambda _{2}}}{\sqrt{\lambda _{1}}}%
e^{-(\sqrt{\lambda _{1}}-\sqrt{\lambda _{2}})^{2}}
\end{equation*}

\end{proposition}

\begin{proof}
$PP_{A,1}= \Pr (A_{1}=A_{2}+1)$. $A_{1}$ and $A_{2}$ are Poisson random
variables with means $\lambda _{1}$ and $\lambda _{2}$, the difference $%
A_{1}-A_{2}$ is Skellam distributed: $\Pr (A_{1}-A_{2}=1)=Sk(\lambda
_{1},\lambda _{2},1)=\allowbreak e^{-(\lambda _{1}+\lambda _{2})}\sqrt{%
\frac{\lambda _{2}}{\lambda _{1}}}I_{|1|}(2\sqrt{\lambda _{1}\lambda _{2}})$%
, where $I_{m}$ is the modified Bessel function of the first kind. Using the
approximation (discussed above) that $I(x)\sim \frac{e^{x}}{\sqrt{2\pi x}}$,
$PP_{A,1}\sim e^{-(\lambda _{1}+\lambda _{2})}\sqrt{\frac{\lambda _{2}}{%
\lambda _{1}}}\frac{e^{2\sqrt{\lambda _{1}\lambda _{2}}}}{\sqrt{2\pi 2\sqrt{%
\lambda _{1}\lambda _{2}}}}=\frac{1}{2\sqrt{\pi }\sqrt[4]{\lambda
_{1}\lambda _{2}}}\frac{\sqrt{\lambda _{2}}}{\sqrt{\lambda _{1}}}e^{-(\sqrt{%
\lambda _{1}}-\sqrt{\lambda _{2}})^{2}}$.

Analogously, $QP_{A,1}=-\Pr(A_1=A_2-1)$. Since $\Pr(A_1-A_2=-1)=Sk(\lambda
_{1},\lambda _{2},-1)$ and the same approximation holds.
\end{proof}

\subsection{Voting Calculus}

Suppose we consider any fixed vote profile $%
(p,q)=((p_{1},q_{1}),(p_{2},q_{2}),\dots ,(p_{K},q_{K}))$ that describes the
probability with which members of each group support $\mathcal{A}$ and $%
\mathcal{B}$ respectively. Given this profile, the following equations
characterize individual private evaluations of party $\mathcal{A}$ relative
to $\mathcal{B}$ (that is, the value of $\varepsilon _{i}$) such that an
individual in group $k$ is indifferent between her vote choices.
\begin{equation}
U_{k}(vote\mathcal{A})-U_{k}(abstain)=(\tau _{Ak})OP_{A}+\zeta_{A}PP_{A,k}-\chi_{A}QP_{A,k}-c=0
\label{AvsAbstain}
\end{equation}%
\begin{equation}
U_{k}(vote\mathcal{B})-U_{k}(abstain)=(\tau _{Bk})OP_{B}+\zeta_{B}PP_{B,k}-\chi_{B}QP_{B,k}-c=0
\label{BvsAbstain}
\end{equation}%
\begin{equation}
\begin{array}{c}
U_{k}(voteA)-U_{k}(voteB)=(\tau _{ABk})(OP_{A}-OP_{B})+ \\
\zeta_{A} PP_{A,k}-\zeta_{B}PP_{B,k}-\chi_{A}QP_{A,k}+\chi_{B}QP_{B,k}=0%
\end{array}
\label{AversusB}
\end{equation}

The thresholds, $\tau_{Ak}$, $\tau_{Bk}$ and $\tau_{ABk}$ that solve these
equations characterize Nash equilibria.

\begin{theorem}
\label{existence}There exist vote probability profiles $(p,q)$ supported by
Nash equilibrium voting behavior: voter $i$ in group $k$ votes for party $%
\mathcal{A}$ if $\varepsilon_{i}>\max \{\tau_{Ak},\tau_{ABk}\}$; votes for $%
\mathcal{B}$ if $\varepsilon_{i}<\min \{\tau_{Bk},\tau_{ABk}\}$ and abstains
if $\min \{\tau_{Bk},\tau_{ABk}\}<\varepsilon_{i}<\max \{\tau
_{Ak},\tau_{ABk}\}$. The thresholds, $\tau_{Ak},$ $\tau_{Bk}$ and $\tau
_{ABk}$, solve equations \ref{AvsAbstain}, \ref{BvsAbstain}, and \ref%
{AversusB} for each group and $p_{k}=1-G(\max \{\tau_{Ak},\tau_{ABk}\})$ and
$q_{k}=G(\min \{\tau_{Bk},\tau_{ABk}\})$.
\end{theorem}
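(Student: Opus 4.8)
The plan is to recast the statement as a fixed-point problem and invoke Brouwer's theorem on the self-consistency map that sends a conjectured profile to the profile induced by optimal voting. First I would fix the strategy space: a voter in group $k$ is summarized by the pair $(p_k,q_k)$ with $p_k,q_k\ge 0$ and $p_k+q_k\le 1$, so the natural domain is $D=\prod_{k=1}^{K}\{(p_k,q_k):p_k,q_k\ge 0,\ p_k+q_k\le 1\}$, a product of $2$-simplices and hence a nonempty compact convex set. Given any $(p,q)\in D$, the outcome-pivot proposition together with the Skellam computation determines $OP_A$ and $OP_B$ from the aggregate means $n_Tp=\sum_k p_kn_k$ and $n_Tq=\sum_k q_kn_k$, while Proposition \ref{PPA_define} determines each group's prize pivots. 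Solving the indifference equations \ref{AvsAbstain}--\ref{AversusB} (each affine in $\varepsilon$) yields the thresholds $\tau_{Ak},\tau_{Bk},\tau_{ABk}$, and I would define $\Phi(p,q)=(p',q')$ by $p_k'=1-G(\max\{\tau_{Ak},\tau_{ABk}\})$ and $q_k'=G(\min\{\tau_{Bk},\tau_{ABk}\})$. A fixed point of $\Phi$ is exactly a profile meeting the consistency conditions in the theorem.

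Before applying Brouwer I would check the two routine hypotheses. That $\Phi$ maps $D$ into $D$ follows because $\min\{\tau_{Bk},\tau_{ABk}\}\le\tau_{ABk}\le\max\{\tau_{Ak},\tau_{ABk}\}$ and $G$ is nondecreasing, so $q_k'=G(\min\{\tau_{Bk},\tau_{ABk}\})\le G(\max\{\tau_{Ak},\tau_{ABk}\})=1-p_k'$, i.e. $p_k'+q_k'\le 1$; nonnegativity is immediate. The monotonicity of the three comparisons in $\varepsilon$ (voting $\mathcal A$ is increasing, voting $\mathcal B$ decreasing, and the direct $\mathcal A$-versus-$\mathcal B$ comparison increasing in $\varepsilon$) shows that the pointwise best response is precisely the threshold rule in the statement, the $\max$ and $\min$ uniformly covering both the case in which an abstention interval exists ($\tau_{Bk}\le\tau_{Ak}$) and the case in which abstention is dominated ($\tau_{Ak}<\tau_{Bk}$), where the cutoff $\tau_{ABk}$ takes over. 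Hence a fixed point does support the claimed equilibrium behavior.

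The substance of the argument is the continuity of $\Phi$, and the single point that must be secured is that the coefficients multiplying $\varepsilon$ in \ref{AvsAbstain}--\ref{AversusB} never vanish on $D$, so that the thresholds stay finite and vary continuously. Here I would use that $A$ and $B$ are independent Poisson variables, whence $\Pr(A=B)=e^{-n_T(p+q)}I_0(2n_T\sqrt{pq})\ge e^{-n_T(p+q)}\ge e^{-n_T}>0$ uniformly on $D$ (using $p+q\le 1$ and $I_0\ge 1$). Consequently $OP_A\ge\frac{1}{2}\Pr(A=B)$, the analogous magnitude for $\mathcal B$, and the coefficient on $\varepsilon$ in \ref{AversusB} (which combines the two outcome-shift probabilities) are all bounded below by $\frac{1}{2}\Pr(A=B)$, hence away from zero. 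Each threshold is therefore a ratio of a continuous numerator $c-PP$ (Proposition \ref{PPA_define} supplies continuity of the prize pivots, and $OP_A,OP_B$ are continuous through the Bessel formula) over a denominator bounded away from $0$, so it is continuous in $(p,q)$; composing with $\max$, $\min$, and a continuous $G$ yields a continuous self-map, and Brouwer's theorem delivers the fixed point. I expect this uniform nondegeneracy of the outcome pivots to be the crux: without it a threshold could diverge and $\Phi$ could jump. The positivity of $\Pr(A=B)$ resolves it cleanly, and should $G$ carry atoms the same scheme goes through with the upper-hemicontinuous best-response correspondence and the Kakutani--Fan--Glicksberg theorem in place of Brouwer.
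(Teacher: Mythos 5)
Your proposal is correct and follows essentially the same route as the paper: both construct the self-consistency (best-response) map sending $(p,q)$ to the profile induced by the threshold solutions of equations \ref{AvsAbstain}--\ref{AversusB}, secure continuity by noting the outcome pivots cannot vanish (the paper via the positive probability that $i$ is the only voter, you via the uniform bound $\Pr(A=B)\ge e^{-n_T(p+q)}I_0(2n_T\sqrt{pq})\ge e^{-n_T}>0$), and then invoke Brouwer, falling back to Kakutani when $G$ has atoms. Your version is somewhat tidier on two points the paper glosses over --- using the product of simplices $p_k+q_k\le 1$ rather than the cube $[0,1]^{2K}$, and verifying explicitly that the map stays inside that domain --- but these are refinements of the same argument, not a different one.
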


\begin{proof}
Given the Poisson population assumption, there is always some, albeit very
small, probability that $i$ is the only voter. In such a setting, her vote
would determine the outcome. This ensures that $OP_{A}>0$ and $OP_{B}<0$.
Therefore equation \ref{AvsAbstain} is an increasing linear function of $%
\tau_{Ak}$. Therefore for any given vote profile $(p,q)$, there is a unique
threshold that solves the equation (and the same for equations \ref%
{BvsAbstain} and \ref{AversusB}). These three equations correspond to
differences in expected value from each of the voter's actions. If $%
\varepsilon_{i}>\max \{\tau_{Ak},\tau_{ABk}\}$ then $i$ votes for $\mathcal{A%
}$ since $U_{k}(vote\mathcal{A})>U_{k}(abstain)$ and $U_{k}(vote\mathcal{A}%
)>U_{k}(vote\mathcal{B})$. Similarly if $\varepsilon_{i}<\min
\{\tau_{Bk},\tau_{ABk}\}$, then $i$ votes for $\mathcal{B}$.\newline
Given the thresholds, an individual in group $k$ votes for $\mathcal{A}$
with probability $\widetilde{p}_{k}(p,q)=1-G(\max \{\tau_{Ak},\tau _{ABk}\})
$ and votes for $\mathcal{B}$ with probability $\widetilde{q}%
_{k}(p,q)=G(\min \{\tau_{Bk},\tau_{ABk}\})$. Since outcome, prize and penalty
pivots are continuous in all components of the vote profile $(p,q)$, the $%
\tau $ thresholds, and hence $\widetilde{p}_{k}(p,q)$ and $\widetilde{q}%
_{k}(p,q)$, are continuous in all components of the vote profile. Let $%
M:[0,1]^{2K}\rightarrow \lbrack 0,1]^{2K}$ be this best response function
for all the groups. That is to say, $M$ maps $(p,q)$ into simultaneous best
responses for all groups $(\widetilde{p},\widetilde{q})=((\widetilde{p}%
_{1}(p,q),\widetilde{q}_{1}(p,q)),\dots,(\widetilde{p}_{K}(p,q),\widetilde{q}%
_{K}(p,q)))$. As $M$ is continuous and maps a compact set back into itself,
by Brouwer's fixed point theorem, a fixed point exists.\footnote{If the function $G$
is discontinuous then $\widetilde{p}_{k}(p,q)$ is an upper hemi-continuous
mapping, voters randomize at the indifference points and a fixed point
exists by Kakutani's fixed point theorem.}
\end{proof}

\section{Competition for Prizes}

The literature focuses on the case where there are neither prizes nor punishments and voters are
pivotal only in terms of which party wins (Ledyard (1984); 
Krishna and Morgan (2012); Myerson (1998); \cite{feddersen1997voting}; \cite{acharya2015equilibrium}). In that policy-only case, turnout is relatively
low and elections are close as expected vote shares are similar. We contrast
that case with the other limiting case in which there are no policy
differences between the parties and groups of voters compete solely for the
prize or to avoid punishment. In this setting the electoral outcome is lopsided. However, the competition between the groups for prizes is close. 

\begin{proposition}
\label{prizeonly} Prize Only Competition: Let $\overline{\lambda }=\max_{i\in K}n_{i}p_{i}$. If party $\mathcal{A}$ offers a large prize ($\zeta_{A}>c$
and $\zeta_{B}=0$), there are no punishments ($\chi_{A}=0$, $\chi_{B}=0$), voting is costly ($c>0$) and there are no policy
differences ($G(x)=0$ for $x<0$\ and $G(x)=1$ for $x\geq 0$), then there
exist Nash equilibria in which at least two groups actively support $%
\mathcal{A}$. Further, if $j\in W_{A}$, then, either $p_{j}n_{j}=\overline{\lambda }$ or $p_{j}=1$. For \textit{non-}$\mathcal{A}$-active
groups, $i\notin W_{A}$, $p_{i}=0$ and $\prod\limits_{j\in
W_{A}}F_{n_{j}p_{j}}(1)\leq \frac{c}{\zeta_{A}}$.
\end{proposition}

\begin{proof}
From theorem ~\ref{existence}, equilibria exist. Further, from Myerson
(1998), we know all equilibria in random population games are type symmetric
so we restrict attention to such strategies. From equation ~\ref{AvsAbstain}
and with no policy differences and no punishments, equilibria require $PP_{A,k}\zeta_{A}=c$. We now
examine a series of cases:

1) Suppose no-one votes: $p_{j}=0$ for all groups $(j=1,\dots ,K)$. Then by
voting any voter could ensure that her group wins the prize. Since $\zeta _{A}>c$, someone wants to vote, so provided the prize is
larger than the cost of voting we can rule out $p_{j}=0$ for all groups.

2) Suppose $p_{j}>0$ for only one group. A member of this group is pivotal
if and only if no one else in her group votes. Hence $p_j>0$ implies $PP_{A,j}=\Pr
(A_{j}=0)=e^{-(n_{j}p_{j})}$. Consider a voter in group $i$, where $%
p_{i}=0$, a vote for party $\mathcal{A}$ wins a prize for group $i$ if $A_{j}\leq 1$.
Therefore for group $i$, $PP_{A,i}=\Pr (A_{j}=0)+\Pr
(A_{j}=1)>e^{-(n_{j}p_{j})}=PP_{A,j}$ (remember that a group receives a prize if it ties for highest support) which contradicts $p_{i}=0$.

3) We now consider the case where at least two groups vote with positive
probability: $p_{j},p_{k}>0$. If $p_{j}\in (0,1)$, then $PP_{A,j} \zeta_{A}=c$. If $%
p_{k}\in (0,1)$, then $PP_{A,k} \zeta_{A} =c$. Hence $PP_{A,j}=PP_{A,k}$ and proposition~\ref{useful_lemma}
implies $n_{j}p_{j}=n_{k}p_{k}$. If $p_{k}=1$ then $PP_{A,k} \zeta_{A} \geq c$.

Therefore, in groups that provide support for party $\mathcal{A}$, either
all $\mathcal{A}$-active groups generate the same number of expected votes
or all members of a group vote for $\mathcal{A}$.

4) Suppose no one in group $i$ votes ($p_{i}=0$), then the chance of being
prize pivotal is less than or equal to the cost of voting. Specifically, if $%
p_{j},p_{k}>0$ then $p_{i}=0$ implies $PP_{A,i} \zeta_{A}=\zeta_{A} \prod\limits_{j\in
K}F_{n_{j}p_{j}}(1) \leq c$. This last condition ensures that no one in group
$i$ wants to vote.
\end{proof}

In non-competitive elections with prize-only equilibria in which only party $%
\mathcal{A}$ offers a prize, we are essentially in a one-party environment
and, therefore of course, only party $\mathcal{A}$ receives any votes.
Despite there being only one credible party in this limiting case, the
extant groups divide into two sets; those that actively support $\mathcal{A}$
($W_{A}$) and those that provide no support ($K\backslash W_{A}$). In
general, the $\mathcal{A}$-active groups generate the same expected number
of voters for $\mathcal{A}$ and it is straightforward to see that $%
n_{i}p_{i} $ increases as the size of the prize $\zeta $ increases and
decreases in the cost of voting, $c$. So, broadly speaking, politicians in
this prize-only setting can shape the turnout rate -- or their mandate -- by
varying the size of the prize. Put differently, a budget constraint imposed
on the prize's size places a limit on turnout. The exception to this
group-turnout symmetry arises when the expected size of an active group is
less than the expected number of votes from other $\mathcal{A}$-active
groups. This situation can arise in equilibrium and when it does every voter
in the smaller group votes for $\mathcal{A}$. Contingent prize allocation
aligns the incentives of voters within groups and coordinates their actions.
Either many voters in a group vote for $\mathcal{A}$ or none of them vote.
The CPAR creates a competition between the groups that is supported by
individually rational voting. It also creates an incentive for a dominant
party to sustain more than one group or faction within its ranks.

Next we examine equilibria in a non-competitive electoral setting when groups compete to avoid penalties. 

\begin{proposition}\label{prop:penaltycompetition}
Penalty Only Competition: Let $\overline{\lambda }=\max_{i\in K}n_{i}p_{i}$. If party $\mathcal{A}$ punishes the uniquely least supportive group ($\chi_{A}>0$, $\chi_{B}=0$), there are no prizes ($\zeta_{A}=0$
and $\zeta_{B}=0$), voting is costly ($c>0$) and there are no policy
differences ($G(x)=0$ for $x<0$\ and $G(x)=1$ for $x\geq 0$), then
either (i) no groups support A ($p_{i}=0$, for all $i\in K$) or
(ii) all groups support A ($p_{i}>0$ for all $i\in K$), and either $p_{j}n_{j}=\overline{\lambda }$ 
or ($n_{j}<\overline{\lambda }$ and $p_{j}=1$). 
\end{proposition}
\begin{proof}
The vote calculus from equation ~\ref{AvsAbstain} implies that for group $i$ if $p_{i}\in (0,1)$ then $-\chi
_{A}QP_{A,i}=c$ and if $p_{i}=1$ then $-\chi _{A}QP_{A,i}\geq c$. 

We start with the pathological case. Suppose that there is some group $j$ such
that $p_{j}=0$. Since group $j$ delivers no votes for $\mathcal{A}$ and $\mathcal{A}$ only punishes
a group if it is the unique, least supportive group, $QP_{A,i}=0$ for all $i$.
Since voting is costly then no group votes: $p_{i}=0$ for all $i\in K$. 

Suppose $p_{i}>0$ for all $i\in K$. If $p_{j}\in (0,1)$ and $p_{k}\in (0,1)$
then $QP_{A,j}=QP_{A,k}=-\frac{c}{\chi _{A}}$. Therefore, from proposition
\ref{prop:deltaQP}, $\lambda _{j}=\lambda _{k}$. Let $\overline{\lambda }%
=\max_{i\in K}\lambda _{i}$, be the maximum expected turnout by any group.
From  proposition \ref{prop:deltaQP}, $|QP_{A,j}|$ is decreasing in $\lambda _{j}$,
so if $\lambda _{j}<\overline{\lambda }$  either group $j$ has a greater
incentive to vote than members of other groups or everyone in group $j$ votes for A ($n_{j}<\overline{\lambda }$). 
\end{proof}
    
The first case described in proposition \ref{prop:penaltycompetition} is clearly a pathology of the tie breaking rule that only a uniquely least supportive group is punished. In the second case, all groups compete to avoid the penalty that $\mathcal{A}$ allocates and all groups provide $\mathcal{A}$ the same level of support (unless this support level exhausts the number of available voters).  

Proposition \ref{prizeonly} tells us that prize competition involves at least 2 $\mathcal{A}$-active groups. However, as seen in proposition \ref{useful_lemma} and illustrated in figure  \ref{FigAsym.pdf}, equilibria with more than two such groups are knife-edged cases that are dynamically unstable. 
 With three groups, initial asymmetries
incentivize members of the lowest turnout group to turn out even less and
the largest turnout group to turn out even more. Obviously, such positive
feedback makes equilibria impossible except in the perfectly balanced
symmetric case. Analogous to Duverger's result that two-party competition
evolves under majoritarian rule (\citealt{duverger1959political}; \citealt{riker1982two}), WTA Contingent
Prize Allocation Rules result in stable competition between two groups
within any individual party. When competition is for prizes, then parties are supported by two groups of voters. Any smaller third group has little prospect of matching the votes of the larger groups and so has lower pivotality. Given this lower prize pivotality, the smaller group has a lower incentive to vote.   

An important distinction between awarding prizes and doling out penalties becomes critical in political settings with more than two potentially viable political groups. Contingent penalties induce stable turnout in multiple group settings. As we see in proposition \ref{prop:deltaQP}, the penalty pivot has an underdog effect that induces the group most likely to be punished to work harder to avoid the penalty. The contingent prize does not have this effect with more than two groups. Hence, politicians confronted by multiple groups have incentives to use punishments (perhaps in conjunction with prizes) to simultaneously induce support from within all groups. Punishments, then, alter Duvergerian expectations of competition being stable only between two groups. 

\section{Equilibrium Voting Behavior}

The competitiveness of elections plays an important role in distinguishing
expected voting behavior in the model. Informally, by non-competitive
elections we mean an election in which one party is virtually certain to
win. In terms of the model, this implies that $OP_{A}\rightarrow 0$. In this section we illustrate symmetric voting
behavior in competitive and non-competitive settings, after which we examine
asymmetric voting behavior between groups that leads to polarization in
which some groups predominantly back party $\mathcal{A}$, others back party $%
\mathcal{B}$ and others remain non-aligned.

Contingent Prize Allocation Rules induce turnout in both competitive and
noncompetitive elections. We illustrate the impact of prize and outcome pivots. First consider a completely symmetric situations in which there are 4 groups, each of the same expected size, and suppose that group 1 and 2 support $\mathcal{A}$ and groups 3 and 4 support $\mathcal{B}$. Consider the equilibrium in which each group votes for the candidate at the same rate: hence, $\lambda_1=\lambda_2=\gamma_1=\gamma_2$ and each party is equally likely to win the election.  Under such a symmetric circumstance, the ratio of prize pivot to outcome pivot is $\sqrt{2}$.

The relative importance of prizes compared to policy preferences has two significant implications for the \citet{black1948rationale} and \citet{downs:1957} views of party competition. First, if candidates converge to the median voter position, as predicted in Downsian competition, then policy
differences are of course irrelevant and prizes alone dictate voting
behavior. Second, because the impact of prizes can be so much larger than
the impact of policy differences, there may be little incentive for parties
to converge on policy.

In contrast to the standard rational voter story, elections need not be
close to induce turnout. Indeed, support for party $\mathcal{A}$ would be
relatively unchanged even if few people supported party $\mathcal{B}$.
Suppose for instance that party $\mathcal{B}$ had a far smaller prize to
offer than party $\mathcal{A}$, as might occur if $\mathcal{A}$ were a
long-term incumbent and $\mathcal{B}$ had never held office and was not
expected to do so in the near future. Cases of dominant parties are the norm
in many nations around the world \citep{kitschelt2007patrons}. Here we
relax the limiting conditions in proposition~\ref{prizeonly} in which $\zeta
_{B}=0$ so that we induce votes for each party. If parties $\mathcal{A}$ and
$\mathcal{B}$ offer prizes worth $\zeta _{A}$ and $\zeta _{B}$,
respectively, and all groups vote symmetrically, then A obtains
approximately $(\zeta _{A}/\zeta _{B})^{2}$ times the votes that B receives.
If $\zeta _{A}$ $>$ $\zeta _{B}$, then the assumption of non-competitive
elections is well justified since the probability that party $\mathcal{B}$
wins is trivial. For instance, if $\zeta _{A}=16$ and $\zeta _{B}=4$, then $%
OP_{A}\approx\frac{1}{\sqrt{\pi }\sqrt{\lambda }}e^{-4\lambda ^{2}}$ where $%
\lambda $ is the expected number of supporters for party $\mathcal{B}$. In
the case of 2 groups and a cost of voting, $\allowbreak c=\frac{1}{5\sqrt{%
\pi }}$, then, with $1000$ expected votes for party $\mathcal{B}$, the
chance of any voter being outcome pivotal is about $\frac{1}{160}\frac{\sqrt{%
10}}{\sqrt{\pi }}e^{-9000}$. In such equilibria, elections are free and
fair, but challengers can never expect to win. With little prospect of
attaining office and hence the resources with which to distribute prizes,
challengers garner relatively little support. Reform is hard. Even if all
the voters want political change, they want such changes enacted by voters
in other groups rather than risk their group's access to prizes by
diminishing their group's vote share. Once incumbents are expected to
win continually, such expectations become self fulfilling. 

\subsection{Polarization and Asymmetric Equilibrium Behavior}

Groups need not behave symmetrically, although as the examples above
illustrated they may. Returning to the nomenclature introduced earlier, $%
W_{A}$ refers to $\mathcal{A}$-active groups, those whose voters supported
party $\mathcal{A}$ with positive probability. These groups compete for the
prize offered by party $\mathcal{A}$. To avoid the need for a more general
definition of active groups, for this section we assume the distribution of
preferences over the parties, $G(x)$, is the uniform distribution over the
interval -1/2 to 1/2. Let $W_{B}$ refer to the set of groups that actively
support party $\mathcal{B}$. Although these assumptions are relaxed in the
next section, we assume here that groups are symmetric in size and
preferences. The proposition below generalizes proposition~\ref{prizeonly}.

\begin{proposition}
\label{polarization}For suitably sized prizes (and no penalties), there exist equilibria in
which there are at least two groups in $W_{A}$, at least two groups in $%
W_{B} $ and members of remaining groups vote for neither party. For interior
solutions, the vote probabilities satisfy the following equations:

\begin{equation}
\text{For }i\in W_{A}:OP_{A,i}(\frac{1}{2}-p_{i})+PP_{A,i}\zeta_{A}=c
\label{polarizeA}
\end{equation}

\begin{equation}
\text{For }j\in W_{B}:OP_{B,j}(q_{j}+\frac{1}{2})+PP_{B,j}\zeta_{B}=c
\label{polarizeB}
\end{equation}

\begin{equation}
\text{For }k\notin W_{A},W_{B}:OP_{A,k}(1/2)+PP_{Ak}\zeta_{A}<c\text{  and  }%
OP_{B,k}(1/2)+PP_{B,k}\zeta_{B}<c  \label{polarizeneutral}
\end{equation}
\end{proposition}

\begin{proof}
Given the uniform distribution, if $p_{i}\in (0,1)$ then $\tau _{i}=\frac{1}{%
2}-p_{i}$. Similarly, if $q_{i}\in (0,1)$ then $\tau _{i}=q_{i}+\frac{1}{2}$%
. Equations \ref{polarizeA} and \ref{polarizeB} are restatements of
equilibrium equations \ref{AvsAbstain} and \ref{BvsAbstain}. Ensuring $%
p_{i}\in (0,1)$ and $q_{i}\in (0,1)$ places bounds on prize size. The
constraint that non-aligned groups do not vote also places a constraint on
prize size. Since expected turnout for these groups is zero, a vote by a
member of such groups is only pivotal in the allocation of the prize if the
turnout from all other groups is 0 or 1: Hence $PP_{A,i\notin W_{A}}=\Pr (A_{1},\dots ,A_{K}\leq 1)=\prod\limits_{k=1,\dots
,K}F_{\lambda _{k}}(1)=\prod\limits_{k\in W_{A}}F_{\lambda _{k}}(1)
$. Applied to equation~\ref{polarizeneutral} this places an upper bound on
prize size.

The proof that there must be at least two active groups follows steps 1 and
2 in the proof of proposition \ref{prizeonly}.
\end{proof}

Proposition \ref{polarization} shows that even when outcome pivot
considerations are taken into account, equilibria retain many of the
properties seen in the limiting prize only case. In particular, provided
prizes are larger than the cost of voting, then voters from at least two
groups actively vote for each party. The $\mathcal{A}$-active and $\mathcal{B%
}$-active groups can be either disjoint, such that certain groups support
only party $\mathcal{A}$ while other groups support only party $\mathcal{B}$%
, or there could be overlap such that there are certain groups which have members who actively support party $\mathcal{A}$ and members who actively party $\mathcal{B}$. More than two active groups for either party can only
be supported under conditions of perfect balance (proposition~\ref{prizeonly}%
) or when penalties are used. Thus, except for perfect balance, there are not less than 2 and not more
than 4 stable groups within a party-office competition based on prizes. These groups need not constitute a majority within the polity. CPAR provide an explanation as to why groups pander to minorities: securing high turnout from two small groups may generate more overall support than appealing to the whole polity \citep{catalinac2015, myerson1993incentives}.

Two groups per party
supports the Duvergerian view of winner-take-all settings. The results,
however, add nuance to the Duvergerian perspective because with two parties
there can be up to four groups within each competition for prizes and, as we saw earlier, many more if penalties supplement prize incentives.
Furthermore, we will show later that parties can break competition for
office into many smaller prize competitions -- as in precinct votes in a
single Congressional district or electoral college votes across states in a
single presidential election. Then we will see that while no more than four
groups can be stable within any of the (prize only) sub-competitions, many more than four
groups can be supported in the overall party competition.

For convenience, the equilibria were stated for the uniform distribution and
the definition of active groups involved any positive probability of voting
for a party. However, we might imagine modifying these definitions.
Intuitively, members of $\mathcal{A}$-active groups (group $j$ for instance) have the prospect of
affecting both which party wins ($OP$) and the distribution of prizes ($%
PP_{A,j}$). In expectation, the aggregation of these individual incentives
delivers $\lambda _{j}$ expected votes for $\mathcal{A}$. In contrast,
non-aligned groups generate far fewer votes, zero in the case above.
However, if the uniform distribution assumption is relaxed and there is full
support over the preference types, then a strong party advocate ($%
|\varepsilon |$ very large) in non-aligned group $k$ wants to vote in order to
influence the outcome of the election. $\mathcal{A}$-active groups can be
redefined as groups with levels of support for $\mathcal{A}$ above some
substantial threshold to accommodate such a generalization. By voting for $%
\mathcal{A}$, a strong party advocate in a non-aligned group could
potentially win the prize for her group. However, since her group generates
relatively few votes for $\mathcal{A}$, her prospects of being prize pivotal
become vanishingly small.
Returning to the definition of prize pivot and supposing that the expected
number of votes from $\mathcal{A}$-active groups, $\lambda _{j}$, vastly
exceeds the expected number of votes in non-$\mathcal{A}$-active groups, $%
\lambda _{k}$, then

\begin{equation*}
PP_{A,k}= \sum_{a=0}^{\infty }f_{\lambda _{k}}(a)\left(
\prod\limits_{j\in W_{A}}F_{\lambda _{j}}(a+1)-\prod\limits_{j\in
W_{A}}F_{\lambda _{j}}(a)\right) \sim 0
\end{equation*}

Since $\lambda _{j}>\lambda _{k}$, for those $a$ where $f_{\lambda _{k}}(a)$
is of substantial magnitude $\prod\limits_{j\in W_{A}}F_{\lambda _{j}}(a+1)$
and $\prod\limits_{j\in W_{A}}F_{\lambda _{j}}(a)$ are approximately zero.
And when $\prod\limits_{j\in W_{A}}F_{\lambda _{j}}(a+1)$ $%
-\prod\limits_{j\in W_{A}}F_{\lambda _{j}}(a)$ is substantial (at values of $%
a$ around $\lambda _{j}$), $f_{\lambda _{k}}(a)$ is tiny. Hence, for members
of groups whose expected support for $A$ is substantially less than the
expected support from other groups, the prize pivot is very small. The
incentive for members of such groups is primarily to influence the electoral
outcome. The voting calculus in such groups is approximately $OP_{A}(\tau
_{A,k})\approx c$. In contrast voters in $\mathcal{A}$-active groups are
motivated by both outcome and prize considerations: $OP_{A}(\tau
_{A,j})+PP_{A,j}\zeta_{A}\approx c$. Given expectations about whether or not their group
has a realistic prospect of being awarded the prize, voter incentives
fulfill such expectations.

Thusfar we have explored symmetric and asymmetric equilibrium behavior under
conditions in which group sizes are symmetric. Now we examine the
implications when group sizes are asymmetric.

\section{Asymmetry and Group Formation}

When elections are competitive, asymmetries in group size and in preferences
are important. Although in the symmetric case, knife edged equilibria
involving more that two $\mathcal{A}$-active groups exist, in the presence
of asymmetry in group size or in group party preferences, parties draw
the bulk of their support from just two groups when prizes are used contingently to incentivize voters.

We start by examining how asymmetries between groups affect the stability of
equilibria under competitive elections. Then we examine group formation and
dynamics from the perspective of voters and political entrepreneurs and
parties. 

\subsection{Asymmetry and Instability}

We now examine the consequences of structural asymmetries between groups
rather than their behavioral differences. The results are
phrased in terms of asymmetries with respect to group size, but shifts in
group preferences for one party over the other have similar effects.
 
A useful starting point for exploring asymmetry is the equilibrium
conditions for $\mathcal{A}$-active groups 1 and 2 that govern indifference
between voting for $A$ and abstaining. From equation~\ref{AvsAbstain}

\begin{equation}
OP_{A}\tau_{A,1}+PP_{A,1}=c=OP_{A}\tau_{A,2}+PP_{A,2}  \label{balance1}
\end{equation}

Rearranging this equation, substituting $\tau_{A,1}=G^{-1}(1-p_{1})$ and
noting that while the prize pivot varies by group, the outcome pivot does
not, yields:

\begin{equation}
OP_{A}(G^{-1}(1-p_{1})-G^{-1}(1-p_{2}))=PP_{A,2}-PP_{A,1}  \label{balance2}
\end{equation}

In the case of the uniform distribution analyzed above, the $%
G^{-1}(1-p_{1})-G^{-1}(1-p_{2})$ term is simply $p_{2}-p_{1}$. For group 2
to support $\mathcal{A}$ at a higher rate than group 1 ($p_{2}>p_{1}$)
implies that $PP_{A,2}>PP_{A,1}$. Combining this with proposition~\ref%
{useful_lemma}, leads directly to the following result:

\begin{proposition}
\label{2group}In competitive elections ($OP_{A}>0$), if there are two $%
\mathcal{A}$-active groups and $n_{1}>n_{2}$, then $p_{2}>p_{1}$ and $%
n_{1}p_{1}>n_{2}p_{2}$. However, as $OP_{A}\rightarrow 0$, $%
n_{1}p_{1}\rightarrow n_{2}p_{2}$.
\end{proposition}

This result implies that with competitive elections more individuals from
the larger group turn out to support $\mathcal{A}$ even though a larger
percentage of the smaller group supports $\mathcal{A}$ -- the underdog effect discussed earlier. Intuitively, for
both groups to provide the same level of support for $\mathcal{A}$ requires
that a higher proportion of the smaller group votes. However this implies
that the indifferent type in the smaller group ($\tau _{A,2}$) likes $%
\mathcal{A}$ less than the indifferent type in the larger group ($\tau
_{A,1}$) and is therefore less motivated to turnout. In non-competitive
elections this preference distinction between groups is of little
consequence. When the electoral winner is not in doubt, policy preferences
over parties do not enter voters' considerations. In the competitive
election setting, the larger group generates a higher expected level of
support for $\mathcal{A}$ and so on average wins the prize. This differential expectation of
winning prizes shapes voter incentives to migrate from one group to another.

\subsection{Voter Organized Groups}

The competition for prizes affects which social cleavages are active and the
evolution of group identity. The competitiveness of elections and the extent
to which prizes are rival or non-rival effect the incentives of voters to
migrate between groups. If a voter were offered the choice to switch groups
prior to playing the voting game, then her propensity to do so depends on
several factors. First, there is an innate personal cost to switching group
identity. Such an emigration cost depends upon the nature of groups. If
groups are geographically based, then the cost is that of relocating. Other
emigration costs might be less tangible, such as learning a new language or
religious practice. Second, beyond the personal cost of emigration, groups
differ in the extent to which they welcome members. Extant members can
charge a high immigration cost for people wishing to join. Alternatively,
they might actively seek to redefine their group's identity to be more
inclusive. The willingness of people to pay the costs of migration and the
barriers that groups set to entry depend on the level of political
competition, the size of prizes and the rival/non-rival nature of the prizes.

When elections are non-competitive the rate of migration between groups is
low and group identities are static. Since the outcome of the election is a
foregone conclusion, policy preferences are irrelevant considerations and
so, beyond needing enough members, precise group size has little effect on
equilibrium turnout. Voters gain little from migration as each $\mathcal{A}$%
-active group has the same equilibrium probability of winning the prize and,
if the prizes are rival, then extant members of groups want to restrict
entry because additional members dilute their share of the prizes without
increasing the likelihood that the group wins a prize. Thus, in the
non-competitive electoral setting, with groups incentivized to raise the
cost of immigration for potential migrants and migrants having little to
gain from migration, group identities are relatively fixed, especially if
prizes are rival.

Group dynamics are more fluid under competitive electoral settings and it is under such circumstances that \citet{chandra2007ethnic} observes that political entrepreneurs attempt to redefine group identities for electoral gain. Unlike the non-competitive setting where expected size differences have little impact on which group wins the prize, when elections are competitive the
larger group has higher expected turnout relative to the smaller group. As
the difference in group size grows, the larger group (group 1) becomes
increasingly likely to be awarded the prize compared to the smaller group
(group 2). This has important implications for group dynamics and electoral
competitiveness.

In the competitive setting, both groups 1 and 2 have incentives to absorb
additional members from non-$\mathcal{A}$-active groups (or from each
other). By taking on additional members, each group increases the
probability of attaining the prize. However, this benefit has to be
contrasted against the number of members who share a rival prize. The desire
to increase the likelihood of winning prizes induces an openness on the part
of groups (lowering immigration costs) and the prospects of winning prizes
creates an incentive for individuals to migrate to $\mathcal{A}$-active
groups. However, the incentive to welcome immigrants into the group
evaporates when the dilution of the value of the prize exceeds the marginal
improvement in the probability of winning the prize. Groups are more open to
absorbing new members when prizes are non-rival.

The fluidity of group membership has the potential to undermine the
competitiveness of elections when one group is more successful at recruiting
members than another. As group 1 grows in size relative to group 2, both
groups reduce their support for party $\mathcal{A}$ because prize pivotality
declines as group 1 becomes more likely to be awarded the prize, as shown in
proposition~\ref{2group}. As the size difference between groups 1 and 2
increases, the number of votes for party A declines in both groups and so $%
\mathcal{A}$ loses more elections. Such a reduction in electoral
competitiveness reduces the incentive to migrate and dampens disparities
between groups. As elections become non-competitive, the incentive for
voters to form larger groups vanishes. The necessity that elections remain
competitive limits the extent to which one $\mathcal{A}$-active group can be
more successful than the other at recruiting new members.

The equilibrium-induced coordination among group members
imposes limits on how far apart groups drift in size when migration across
groups is possible. Politicians also have incentives to influence group
divisions. We now examine those incentives and how they may influence
electoral competition and turnout.

\subsection{How politicians organize groups}

Although many group identities, such as race, religion or ethnicity, may be
primordial, others clearly are artificial constructs created by politicians. For instance, the City of Chicago is divided into 50 wards.
Why did politicians create 50 groups, instead of 2, 20 or 200, and how is
political competitiveness structured between these groups? Organizing direct prize competition between 50 wards is unlikely to
engender high levels of political support because, in the presence of
asymmetry, only two $\mathcal{A}$-active groups (e.g. wards) are part of a stable
equilibrium in a competition for prizes. However, rather than organize a single competition between many
groups, parties can create numerous competitions between smaller subsets of
groups such as has been done in Chicago. When prizes are rival it is in
their interest to do so.

To illustrate how parties structure competition for prizes, suppose there are 16
roughly evenly sized groups, referred to in our illustration as wards. A dominant party has the goal of obtaining
the support of 60\% of the voters while minimizing its expenditure on
prizes. Given 60\% support, elections are non-competitive so we restrict
attention to prize motivations. 
 If party $\mathcal{A}$ offers a single prize
to the most supportive of the 16 wards, then in equilibrium only two wards 
will be highly supportive and the 60\% support goal can not be realized.
However, parties can structure competition differently through various
aggregation processes.  For instance, the administration of services for the 16 wards can be divided between two or more counties. If there are two counties, then the 16 wards are divided 8 and 8. 
A party might then award the prize to the county that generates
the greatest number of votes, in which case there is a single prize that is shared by the 8 wards in the most supportive county. Alternatively, the party might form 8 counties
each containing 2 wards and have a competition
for a prize in each of the eight counties. Which configuration is optimal depends
upon the rivalness of prizes.

Table 1 calculates the relative cost of prize provision needed to elicit a total of 
60\% support under different divisions of wards into counties. These costs depend
upon three factors: 1) the number of prizes, which depends upon the number
of counties, 2) the number of people benefiting from each prize and
whether the prize is rival or non-rival, and 3) the number of wards in each county, as this
effects prize pivotality. Table 1 illustrates the impact of each of these factors
for rival and non-rival prizes.

As political competition is broken into a series of small sub-contests,
parties must provide more prizes. The non-rival prize assumes that everyone in
the group benefits from the total value of the prize and the cost of prize provision is
unrelated to the size of the group. When prizes are
non-rival, parties prefer a small number of groups that compete within a
single competition for the prize. Once provided, all members of the group
enjoy the prize so creating multiple competitions simply means additional
expenditure because multiple prizes have to be created. Therefore, the theory implies that competition over
non-rival prizes, such as language or religious supremacy, takes place at
the national level. In contrast, when prizes are rival, such as traditional
patronage goods, then parties can reduce their expenditure by creating
numerous smaller competitions.

\noindent{Table 1: Competitions and the Overall Cost of Prize Provision.*}

{\small \noindent
\begin{tabular}{|l||c|c|c|c|}
\hline
Number of Counties & 2 & 4 & 8 & 16 \\  \hline
Number of Wards per County & 8 & 4 & 2 & 1 \\ \hline\hline
Relative Cost: Non-Rival Prize & {$\mathbf{\ 1\cdot 1\cdot\sqrt{8}=2.83}$ } & $2\cdot
1\cdot \sqrt{4}=4$ & $4\cdot 1 \cdot \sqrt{2}=5.66$ & $8\cdot 1\cdot \sqrt{1}%
=8$ \\ \hline
Relative Cost: Rival Prize & $1\cdot 8\cdot \sqrt{8}=22.63$ & $2\cdot 4\cdot \sqrt{4}=16$ &
$4\cdot 2\cdot \sqrt{2}=11.31$ & {$\mathbf{8\cdot 1\cdot \sqrt{1}=8}$ } \\
\hline
\end{tabular}

\noindent 
* In each cell of the last two rows, the first value corresponds to the number of prizes, the second refers to the cost of generating the prize (which increases with the number of wards for rival prizes) and the third relates to each county's size (and hence pivotality).
}

In the rival prize setting, whether there is a single competition between 2 counties each composed of 8 wards or 8 smaller county competitions, half the people are in wards that
receive prizes. Parties can generate the same level of support by breaking
political contests into numerous smaller competitions. Prize pivots are on
the scale of $1/\sqrt{n}$. Thus a single competition requires a rival prize
of size $\zeta \approx c/PP_{A,i}=c2\sqrt{\pi }\sqrt{p\frac{n_{T}}{2}}$ \
where $p=.6$ indicates the 60\% support goal. In contrast, if political
competition is broken down into 8 separate competitions then obtaining 60\%
support requires rival prizes on the scale of $\zeta \approx c2\sqrt{\pi }%
\sqrt{p\frac{n_{T}}{16}}$ which means the size of the rival prize each
recipient needs to receive is only $\frac{1}{\sqrt{8}}$ of the value
required in the single competition case. By breaking the competition for
rival prizes into a series of local competitions, parties can reduce the
amount of resources they need to elicit political support. 

\section{Conclusions}

Using a Poisson games framework of Myerson (1998, 2000), we have modeled
elections in which parties offer prizes or penalties to identifiable
groups of voters on a contingent basis. The
model demonstrates that even in large populations, in which voters have
little influence on the outcome of elections, they retain significant
influence over the distribution of prizes. This influence persists even in
lopsided elections, giving all political parties, whether in competitive or
non-competitive environments, an incentive to encourage factions so as to
manage turnout and achieve the appearance of a mandate whether they are
popular or not. We have specified the conditions under which voter turnout
fluctuates as a function of four considerations: the value of contingent
prizes or penalties; the extent to which prizes are rival or non-rival; the degree to
which elections are competitive; and the extent to which the size of voter
groups are symmetric or asymmetric. Equilibrium behavior is more likely to
be driven by voters competing to win preferential treatment for their group
than by policy concerns. The model also provides a modified and more nuanced
understanding of the implications of winner-take-all settings beyond the
standard Duvergerian account. When penalties rather than prizes are used turnout can be induced and group stability sustained for any number of groups. 

The results offer insights into group dynamics. In non-competitive
electoral settings, voters have little reason to shift groups or alter their
political identity and group members have little reason to welcome the
entreaties of others to join them. In contrast, competitive elections induce
fluidity in group membership, at least up to a limit. When prizes are rival,
then extant groups of voters are happy to welcome new members as long as
they improve the probability of winning the prize more than they dilute the
value of the prize. In this way we can see both fluidity and self-sustaining
features to prize-motivated groups and explanations for variance in turnout
by rational voters. 
\bibliographystyle{apsr}

\bibliography{poisson}

\newpage

\begin{figure}
\begin{center}
\includegraphics[height=3.7in]{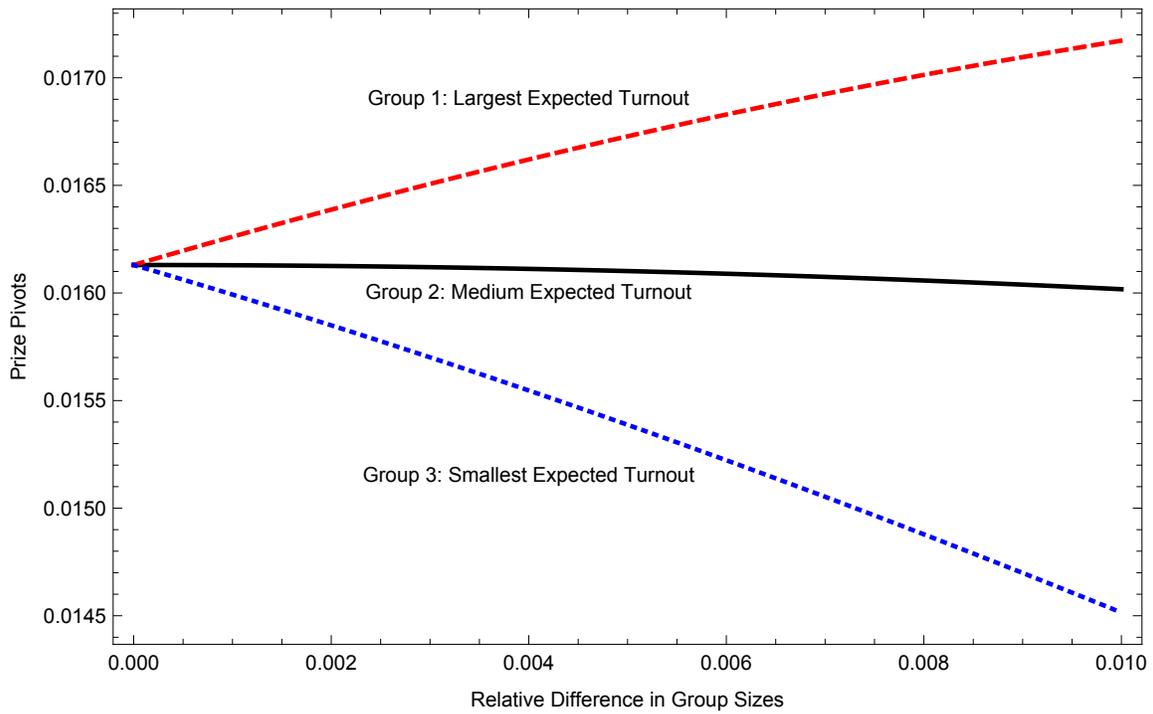}
\caption{Prize Pivots and Asymmetric Group Size}
\label{FigAsym.pdf}
\end{center}
\end{figure}

\end{doublespacing}
\end{document}


\title{Online Appendix: Group Incentives and Rational Voting}
\author{Alastair Smith \and Bruce Bueno de Mesquita  \and Tom LaGatta}

\maketitle

\pagenumbering{arabic}

{\normalsize 
This appendix examines two issues. First, we show the positive feedback in prize pivots when 3 or more groups actively support $\mathcal{A}$. Second, we derive asymptotic approximations for prize pivots in the case of more than two $\mathcal{A}$-active groups.

\subsection{Positive Feedback in Prize Pivots with more the 2 $\mathcal{A}$ Groups}
In the main text we simply illustrated the positive feedback in prize pivots from increased group turnout with a graph. Here we address the issue more systematically. 
As equation 6 shows, $\Delta P$ contains three terms. The first and third
terms are summations over two Probability Mass Functions and a CDF, while the second term involves
three PMF. Provided the $\lambda $'s are large the second term is
small in magnitude compared to the other two terms. We define $\widehat{%
\Delta P}$ as the first and third terms of equation 6. For three groups
with turnout $\lambda _{1},\lambda _{2}$ and $\lambda _{3}$ and noting that $%
f_{\lambda }(a+1)=f_{\lambda }(a+1)\frac{\lambda }{a+1}$ and $F_{\lambda
}(a)=\sum_{x=0}^{a}f_{\lambda }(a)$, we can write

\begin{equation}
\widehat{\Delta P}=\sum_{a=0}^{\infty }\frac{e^{-\lambda _{1}-\lambda
_{2}-\lambda _{3}}}{a!a!(a+1)}\sum_{x=0}^{a}\left( \frac{\lambda
_{1}^{a}\lambda _{2}^{a}\lambda _{3}^{x}}{x!}(\lambda _{2}-\lambda _{1})+%
\frac{\lambda _{3}^{a}\lambda _{3}}{x!}(\lambda _{1}^{a}\lambda
_{2}^{x}-\lambda _{2}^{a}\lambda _{1}^{x})\right) 
\end{equation}
Presented in these terms, the second term of $\Delta P$ is  
\begin{equation}
\sum_{a=0}^{\infty }\frac{e^{-\lambda _{1}-\lambda
_{2}-\lambda _{3}}}{a!a!(a+1)}\frac{\lambda_{1}^{a} \lambda_{2}^{a} \lambda_{3}^{a}}{a!(a+1)}\lambda_{3}(\lambda_{2}-\lambda_{1})
\end{equation}
Suppose that $\lambda _{1}=\lambda \rho $ and $\lambda _{2}=\lambda
_{3}=\lambda $, $\widehat{\Delta P}=\sum_{a=0}^{\infty }\frac{e^{-2\lambda -\lambda \rho
}\lambda ^{a}\lambda ^{a}\lambda }{a!a!(a+1)}S(a)$, where $%
S(a)=\sum_{x=0}^{a}\frac{\lambda ^{x}}{x!}\left( \rho ^{a}(1-\rho )+\rho
^{a}-\rho ^{x}\right) $.
We now examine how $\widehat{\Delta P}$ varies with $\rho $ evaluate at $%
\rho =1$.
\begin{equation}
\frac{d\widehat{\Delta P}}{d\rho }=\sum_{a=0}^{\infty }-\lambda \frac{%
e^{-2\lambda -\lambda \rho }\lambda ^{a}\lambda ^{a}\lambda }{a!a!(a+1)}%
S(a)+\sum_{a=0}^{\infty }\frac{e^{-2\lambda -\lambda \rho }\lambda
^{a}\lambda ^{a}\lambda }{a!a!(a+1)}\frac{dS(a)}{d\rho } \end{equation}
where \begin{equation} \frac{dS(a)%
}{d\rho }=\sum_{x=0}^{a}\frac{\lambda ^{x}}{x!}\left( 2a\rho
^{a-1}-(a+1)\rho ^{a}-x\rho ^{x-1}\right)  \end{equation}

Evaluated at $\rho =1$, $S(a)=0$ and $\frac{dS(a)}{d\rho }=\sum_{x=0}^{a}%
\frac{\lambda ^{x}}{x!}\left( a-1-x\right) =\frac{\lambda ^{a+1}}{a!}+\frac{%
e^{\lambda }(a-1-\lambda )\Gamma (a+1,\lambda )}{a!}$. The only negative
term in the summation for $\frac{dS(a)}{d\rho }$ corresponds to $x=a$ and all other terms in the
sequence are positive. Hence the summation is increasing $a$. To see this formally, note that $e^{-\lambda }\sum_{x=0}^{a}\frac{\lambda ^{x}}{%
x!}\left( a-1-x\right) <e^{-\lambda }(-\frac{\lambda ^{a}}{a!}%
+\sum_{x=0}^{a-2}\frac{\lambda ^{x}}{x!})=F_{\lambda }(a-2)-f_{\lambda }(a)$. Hence $F_{\lambda }(a-2)\geq f_{\lambda
}(a)$ is a sufficient condition for $\sum_{x=0}^{a}%
\frac{\lambda ^{x}}{x!}\left( a-1-x\right) >0$ and $F_{\lambda }(a-2)- f_{\lambda
}(a)$ is strictly increasing in $a$. Further for large $\lambda$,  when $F_{\lambda }(a-2)- f_{\lambda
}(a)$ is negative (which occurs for only for $a<<\lambda$), it is small in magnitude.

$\left. \frac{d\widehat{\Delta P}}{d\rho }\right\vert _{\rho
=1}>\sum_{a=0}^{\infty }f_{\lambda }(a)^{2}\frac{\lambda}{a+1}(F_{\lambda }(a-2)-f_{\lambda
}(a))$ which is positive for large $\lambda $. 
\begin{lemma}
For $\lambda _{1}=\lambda \rho $ and $\lambda _{2}=\lambda
_{3}=\lambda $ and $\lambda$ large, 
$\left. \frac{d\widehat{\Delta P}}{d\rho }\right\vert _{\rho=1}>0$
\end{lemma}

\subsection{Asymptotic approximation for the
prize pivot for the general case of $K$ $\mathcal{A}$-active groups.}
The proposition below provides an approximation for $K=3$ \ groups based upon
the expansion of equation 2, hence we refer to it as the
expansion approximation. The proof deals with the general case of $K$
groups. As $K$ increases, the expansion approximation involves $(K-1)+\frac{%
(K-1)(K-2)}{2}+\frac{(K-1)(K-2)(K-3)}{3!}+\dots $ terms. However as expected
turnouts become large the latter terms become small relative to the first $%
(K-1)$ terms.\footnote{%
The $(K-1)$ terms involve summations over $K-2$ CDF terms and 2 PMF terms.
The $(K-1)(K-2)/2$ terms involve summations over $K-3$ CDF terms and 3 PMF
terms. As the $\lambda $'s become large the PMF become small relative to the
CDF terms, so the later terms can be ignored.} As $K$ increases this results
in many terms, although in symmetric cases many of these terms are the same.
The approach yields accurate approximations as $np$ increases. For instance,
for $K=3$ and $\lambda _{1}=\lambda _{2}=\lambda _{3}=1000$, the error in
the expansion approximation is about 1.5\%. As expected turnouts increase,
the approximation become increasingly accurate. Let $n$ denote the average
population size, so that $n\omega _{i}$ denotes the expected size of group $i
$. Let $\lambda _{i}=n\omega _{i}p_{i}$ denote the expected number of votes
for party $\mathcal{A}$ from group $i$. }

{\normalsize To analyze pivot probabilities correctly, we distinguish
between different types of asymptotic notation. If $f = f(n)$ and $g = g(n)$
are two functions depending on $n$, we write $f \approx g$ to indicate that
their difference tends to zero as $n\to\infty$. That is, $f \approx g$ if
and only if $|f(n) - g(n)| \to 0$. This is a very strong type of
convergence, and is generally not suitable for situations where both $f$ and
$g$ tend to infinity, since usually their difference $|f-g|$ often also
tends to infinity, albeit at a much slower rate. }

{\normalsize Instead, we write $f\sim g$ to indicate that the ratio of the
two functions tends to one as $n\rightarrow \infty $. That is, $f\sim g$ if
and only if $f/g\approx 1$, meaning that $|\tfrac{f(n)}{g(n)}-1|\rightarrow 0
$. This weak form of convergence allows us to efficiently analyze pivot
probabilities when the mean population size is large since probabilities are
calculated by taking ratios of large quantities. }

{\normalsize The crucial assumption we make is that $\lambda _{i}\rightarrow
\infty $ as $n\rightarrow \infty $, corresponding to an equilibrium with
sufficiently large turnout. The rate at which $\lambda _{i}\rightarrow
\infty $ does not matter for our analysis; simply that the limit exists and
is infinite is sufficient. This is not an innocuous assumption, and in the
worst-case scenario it is not true. Without a local incentive structure,
the usual rational-choice theory implies that $p_{i}\sim c_{i}/n$, for some
constant $c_{i}$ depending on the district. This implies that $\lambda
_{i}\sim \omega _{i}c_{i}$ instead of $\lambda _{i}\rightarrow \infty $.
Historically, the central question of rational-choice voting models has been
to prove results of the form $p_{i}\approx \bar{p}_{i}$ for some constant $%
\bar{p}_{i}\in (0,1]$, which entails a total turnout of $\lambda _{i}\sim
\bar{p}_{i}\omega _{i}n$. }

{\normalsize In our analysis, by assuming reasonably sized incentives and by
assuming that $\lambda _{i}\rightarrow \infty $, we are able to guarantee
that $\lambda _{i}\sim \bar{p}_{i}\omega _{i}n$ for some constant $\bar{p}%
\in (0,1]$. Our prize assumption is simply that the winning group receives a
prize of $\zeta \sim \zeta ^{\prime }\sqrt{n}$ from party $\mathcal{A}$,
where $\zeta ^{\prime }$ is a positive constant not depending on $n$. If
prizes are much smaller than this, then $p_{i}\sim c_{i}/n$ as the classic
theory predicts. While our results do not completely rule out the
possibility that $\lambda _{i}\not\rightarrow \infty $ in some equilibria,
they render that prospect extremely unlikely. }

{\normalsize We now estimate the asymptotic behavior of the prize pivot as $%
n \to \infty$.}

\begin{proposition}
{\normalsize \label{pro_PPapprox} Winner-Take-All Prize Pivot Approximation:
}

{\normalsize Consider the case of $K=3$ groups, and define the functions
\begin{eqnarray*}
y_{2}(a) &=&-\omega _{1}p_{1}-\omega _{2}p_{2}+\frac{1}{n}\log \left( \frac{%
(n\omega _{1}p_{1})^{a}}{\Gamma (a+1)}\frac{(n\omega _{2}p_{2})^{a}}{\Gamma
(a+1)}\frac{(n\omega _{2}p_{2})}{(a+1)}\prod\limits_{j\neq 1,2}\Phi \Big(%
\frac{a-n\omega _{j}p_{j}}{\sqrt{n\omega _{j}p_{j}}}\Big)\right)  \\
y_{3}(a) &=&-\omega _{1}p_{1}-\omega _{3}p_{3}+\frac{1}{n}\log \left( \frac{%
(n\omega _{1}p_{1})^{a}}{\Gamma (a+1)}\frac{(n\omega _{3}p_{3})^{a}}{\Gamma
(a+1)}\frac{(n\omega _{3}p_{3})}{(a+1)}\prod\limits_{j\neq 1,3}\Phi \Big(%
\frac{a-n\omega _{j}p_{j}}{\sqrt{n\omega _{j}p_{j}}}\Big)\right)  \\
y_{23}(a) &=&-\omega _{1}p_{1}-\omega _{2}p_{2}-\omega _{3}p_{3}+\frac{1}{n}%
\log \left( \frac{(n\omega _{1}p_{1})^{a}}{\Gamma (a+1)}\frac{(n\omega
_{2}p_{2})^{a}}{\Gamma (a+1)}\frac{(n\omega _{2}p_{2})}{(a+1)}\frac{(n\omega
_{3}p_{3})^{a}}{\Gamma (a+1)}\frac{(n\omega _{3}p_{3})}{(a+1)}\right) ,
\end{eqnarray*}%
where $\Phi $ represents the standard normal distribution function, and $%
\Gamma $ is the gamma function. For each of the three indices $u=2,3,23$,
let $\alpha _{u}$ denote the unique critical point to the function $y_{u}$,
which corresponds to a global maximum.\footnote{%
i.e., $\alpha _{u}$ solves the equation $y_{u}^{\prime }(\alpha _{u})=0$.
Note that the functions $y_{u}$ and the critical points $\alpha _{u}$ all
depend on $n$.} }

Define the maximum value $y_{max}(n) := \max_{u = 2, 3, 23} y(\alpha_u)$,
and let $\mathcal{U}_n := \big\{ u : y(\alpha_u) = y_{ max} \big\}$ denote
those indices which attain the maximum. Define
\begin{equation}  \label{rhodef}
\rho(n) = \sum_{u \in \mathcal{U}_n} \sqrt{\frac{2\pi}{n | y^{\prime \prime
}_u(\alpha_u)|}}.
\end{equation}

{\normalsize Suppose that $\lambda_i \to \infty$ as $n \to \infty$ for at
least one district. Then
\begin{equation*}
PP_{A,1}\sim \widetilde{PP_{A,1}}(n) := \zeta \rho(n) e^{n y_{max}(n)}
\end{equation*}
}

{\normalsize as $n \to \infty$.}
\end{proposition}

\begin{proof}
{\normalsize First we derive the general $K$ group expansion approximation,
then specialize to the case $K=3$. From the definition of prize pivot we
expand the products: }

{\normalsize
\begin{eqnarray*}
PP_{A,1} &=&\zeta \sum_{a=0}^{\infty }f_{\lambda_{1}}(a)(\prod\limits_{j\neq
1}F_{\lambda_{j}}(a+1)-\prod\limits_{j\neq 1}F(a)) \\
&=&\zeta \sum_{a =0}^{\infty }f_{\lambda_{1}}(a)\prod\limits_{j\neq
1}(F_{\lambda_{j}}(a)+\frac{\lambda_{j}}{(a+1)}f_{\lambda_{j}}(a))-\zeta
\sum_{a =0}^{\infty }f_{\lambda_{1}}(a)\prod\limits_{j\neq
1}F_{\lambda_{j}}(a) \\
&=&\zeta \sum_{a =0}^{\infty }f_{\lambda_{1}}(a)[\sum_{i\neq 1}\frac{%
\lambda_{i}}{(a+1)}f_{\lambda_i}(a)\prod\limits_{j\neq i,1}F_{\lambda
_{j}}(a)+ \\
&&\sum_{i}\sum_{j}\frac{\lambda_{i}}{(a+1)}f_{\lambda_i}(a)\frac{\lambda _{j}%
}{(a+1)}f_{\lambda_{j}}(a)\prod\limits_{k\neq i,j,1}F_{\lambda
_{k}}(a)+\dots].
\end{eqnarray*}
}

{\normalsize To analyze this expression, we define
$s_{i}=\zeta \sum_{a
=0}^{\infty }f_{\lambda_{1}}(a)\frac{\lambda_{i}}{(a+1)}f_{\lambda
i}(a)\prod\limits_{j\neq i,1}F_{\lambda_{j}}(a)$ \\ and $s_{ij}=\zeta \sum_{a
=0}^{\infty }f_{\lambda_{1}}(a)\frac{\lambda_{i}}{(a+1)}f_{\lambda_i}(a)%
\frac{\lambda_{j}}{(a+1)}f_{\lambda _{j}}(a)\prod\limits_{k\neq
i,j,1}F_{\lambda_{k}}(a)$, etc., so that $PP_{A,1} = \sum_i s_i + \sum_{ij}
s_{ij} + \sum_{ijk} s_{ijk} + \dots$. If there are three groups, then $%
PP_{A,1}=s_{2}+s_{3}+s_{23}$. Note that as the $\lambda $'s \ become large,
the higher-order terms ($s_{ij}$ and $s_{ijk}$ \dots) become small relative
to $s_{i}$'s. }

{\normalsize We estimate each term as an integral given by the
Euler-Maclarin formula. If $h(a)$ is any integrable real-valued function,
the Euler-Maclarin formula states that
\begin{equation*}
\sum_{a=0}^{w}h(a)=\int_{1}^{w}h(a)da-B_{1}(h(w)-h(1))+\sum_{y=1}^{r}\frac{%
B_{2y}}{(2y)!}(h^{(2y-1)^{\prime }}(w)-h^{(2y-1)^{\prime }}(1))+R,
\end{equation*}
} {\normalsize where $B_{1}=-\frac{1}{2}$, $B_{2}=1/6$, $%
B_{3}=0,B_{4}=-1/30,B_{6}=1/42,$\dots are the Bernoulli numbers, and $R \to
0 $ as $w \to \infty$. These estimates are well justified since once the $%
\lambda $'s are greater than 100, the higher order derivatives evaluated at $%
0$ and $3\lambda $ are less than $10^{-50}$. }

{\normalsize We analyze the term $s_2$ in detail; the corresponding analyses
for the terms $s_3$ and $s_{23}$ are similar so we omit them. Applying the
Euler-Maclarin formula,
\begin{eqnarray*}
s_{2} \,\approx\, \widetilde{s_{2}} &:=& \zeta \int_{a=0}^{\infty
}f_{\lambda _{1}}(a)\frac{\lambda_{i}}{(a+1)}f_{\lambda_i}(a)\prod\limits_{j%
\neq i,1}F_{\lambda_{j}}(a)da \\
&=&\zeta \int_{a=0}^{\infty }e^{-\lambda_{1}}\frac{\lambda_{1}^{a}}{\Gamma
(a+1)}e^{-\lambda_{2}}\frac{\lambda_{2}^{a}}{\Gamma (a+1)}\frac{\lambda _{2}%
}{(a+1)}\prod\limits_{j\neq 1,2}F_{\lambda_{j}}(a)da
\end{eqnarray*}
}

{\normalsize Next we substitute $n_{i}=\omega_{i}n$ where $n$ is the average
group size. By the Central Limit Theorem, the Poisson distribution function
is well-approximated by the Gaussian distribution function: $F_{n\omega
_{j}p_{j}}(a)\approx \Phi (\frac{a-n\omega_{j}p_{j}}{\sqrt{n\omega_{j}p_{j}}}%
)$. Applying these, we have }

\begin{eqnarray*}
\widetilde{s_{2}} &=&\zeta \int_{a=0}^{\infty }e^{-n\omega
_{1}p_{1}}e^{-n\omega_{2}p_{2}}\frac{(n\omega_{1}p_{1})^{a}}{\Gamma (a+1)}%
\frac{(n\omega_{2}p_{2})^{a}}{\Gamma (a+1)}\frac{(n\omega_{2}p_{2})}{(a+1)}%
\prod\limits_{j\neq 1,2}\Phi (\frac{a-n\omega_{j}p_{j}}{\sqrt{n\omega
_{j}p_{j}}})da \\
&=&\zeta \int_{0}^{\infty }e^{n(y_{2}(a))}da,
\end{eqnarray*}

{\normalsize where
\begin{eqnarray*}
y_{2}(a) &=&-\omega_{1}p_{1}-\omega_{2}p_{2}+\frac{1}{n}\log (\frac{%
(n\omega_{1}p_{1})^{a}}{\Gamma (a+1)}\frac{(n\omega_{2}p_{2})^{a}}{\Gamma
(a+1)}\frac{(n\omega_{2}p_{2})}{(a+1)}\prod\limits_{j\neq 1,2}\Phi (\frac{%
a-n\omega_{j}p_{j}}{\sqrt{n\omega_{j}p_{j}}})) \\
&=&-\omega_{1}p_{1}-\omega_{2}p_{2}+\frac{1}{n}(a\log (n\omega
_{1}p_{1})+(a+1)\log (n\omega_{2}p_{2}) \\
&& ~~~ -\frac{2}{n} \log \Gamma (a+1) - \frac 1 n \log (a+1)+ \frac 1 n
\sum_{j\neq 1,2}\log \Phi \Big(\frac{a-n\omega_{j}p_{j}}{\sqrt{%
n\omega_{j}p_{j}}}\Big).
\end{eqnarray*}
}

{\normalsize Our methodology now is to estimate the integral $\widetilde{%
s_{2}}=\zeta \int_{0}^{\infty }e^{n(y_{2}(a))}da$ using a modification of
Laplace's method. The idea is that, since $n$ is large, nearly all the value
of this integral is concentrated near the maximum of the function $y_2$.
This is not difficult, but care must be taken since both the function $y_2$
and its critical point depend on the large parameter $n$. }

{\normalsize This is where we make use of the assumption that $\lambda_1 =
n\omega_1 p_1 \to \infty$ or $\lambda_2 = n\omega_2 p_2 \to \infty$.%
\footnote{%
If only $\lambda_3 \to \infty$, then replace the discussion of $\widetilde{%
s_2}$ with that of $\widetilde{s_3}$.} Since $n y_2(a) = -\lambda_1 -
\lambda_2 - \cdots$, this limiting assumption guarantees that the integral
exhibits exponential concentration. On the other hand, if the limit-suprema
of both $\lambda_1$ and $\lambda_2$ are both bounded above, then the
concentration is of polynomial type, and $\widetilde{s_2}$ is impossible to
analyze using Laplace's method. }

{\normalsize To proceed, we multiply the function $y_2$ by $n$ and
differentiate.\footnote{%
The derivatives of the $\Gamma $ function are the digamma function $\psi (x)=%
\frac{d}{dx}\log \Gamma (x)=\frac{\Gamma ^{\prime }(x)}{\Gamma (x)}$ and the
polygamma function $\psi ^{(m)}(x)=\frac{d^{m}}{dx^{m}}\log \Gamma (x)$.}
The first and second derivatives of $n y_{2}(a)$ with respect to $a$ are
equal to }

{\normalsize
\begin{equation*}
ny_{2}^{\prime }(a)=\log (n\omega_{1}p_{1})+\log (n\omega_{2}p_{2})-2\psi
(a+1)-\frac{1}{a+1}+\sum_{j\neq 1,2}\frac{1}{\sqrt{n\omega_{j}p_{j}}}\frac{%
\phi (\frac{a-n\omega_{j}p_{j}}{\sqrt{n\omega_{j}p_{j}}})}{\Phi (\frac{%
a-n\omega_{j}p_{j}}{\sqrt{n\omega_{j}p_{j}}})}
\end{equation*}
}

{\normalsize and }

{\normalsize
\begin{equation*}
ny_{2}^{\prime \prime }(a)=-2\psi ^{(2)}(a+1)+\frac{1}{\left( a+1\right) ^{2}%
}-\sum_{j\neq 1,2}\frac{1}{n\omega_{j}p_{j}}\left( \frac{(\frac{%
a-n\omega_{j}p_{j}}{\sqrt{n\omega_{j}p_{j}}})\phi (\frac{a-n\omega _{j}p_{j}%
}{\sqrt{n\omega_{j}p_{j}}})}{\Phi (\frac{a-n\omega_{j}p_{j}}{\sqrt{%
n\omega_{j}p_{j}}})}+\frac{\phi (\frac{a-n\omega_{j}p_{j}}{\sqrt{%
n\omega_{j}p_{j}}})^{2}}{\Phi (\frac{a-n\omega_{j}p_{j}}{\sqrt{n\omega
_{j}p_{j}}})^{2}}\right) <0.
\end{equation*}
}

{\normalsize Let $\alpha_2$ denote the unique solution to $ny_2^{\prime
}(\alpha_2) = 0$ (equivalently, $y_2^{\prime }(\alpha_2) = 0$). By expanding
the function $y_2$ in a Taylor series about the critical point $\alpha_2$,
we have}

{\normalsize
\begin{eqnarray*}
\widetilde{s_{2}} &=&\zeta \int_{0}^{\infty }e^{n(y_{2}(a))}da=\zeta
\int_{0}^{\infty }e^{n(y_{2}(\alpha_2)+(a-\alpha_2)y_{2}^{\prime }(\alpha_2)+%
\frac{1}{2}(a-\alpha_2)^{2}y_{2}^{\prime \prime }(\alpha_2)+\dots)}da \\
&\sim &\zeta \int_{0}^{\infty }e^{n(y_{2}(\alpha_2)+\frac{1}{2}%
(a-\alpha_2)^{2}y_{2}^{\prime \prime }(\alpha_2))}da = \zeta e^{n
y_{2}(\alpha_2)} \int_{0}^{\infty }e^{\frac{n}{2}(a-\alpha_2)^{2}y_{2}^{%
\prime \prime }(\alpha_2)}da,
\end{eqnarray*}%
since $y_2^{\prime }(\alpha_2) = 0$, and the first term does not depend on $%
a $. The approximation is justified since $y_{1}$ has a unique maximum at $%
a=\alpha_2$, and the terms decay rapidly in $n$ since $y^{\prime \prime
}(\alpha_2) <j 0$.\footnote{%
Importantly, the approximation is of type $\sim$ (instead of $\approx$)
since the two integral expressions agree in leading order terms (hence their
ratio converges to $1$). The difference between the two integrals grows
exponentially in $n$, but at a much lower rate, so it may be safely ignored.}
}

{\normalsize We may now exploit the central observation of Laplace's method:
the integrand is an unnormalized Gaussian density function (with mean $%
\alpha_2$ and variance $1 / ( ny_2^{\prime \prime }(\alpha_2))$), so the
integral over the whole real line may be computed explicitly. The
contribution over the negative real axis is negligible as $n \to \infty$
since $\alpha_2 > 0$, so
\begin{equation*}
s_2 \sim \widetilde{s_2} \sim \zeta e^{ny_{2}(\alpha_2)}
\int_{-\infty}^{\infty }e^{\frac{1}{2}(a-\alpha_2)^{2}y_{2}^{\prime \prime
}(\alpha_2)}da = \zeta e^{ny_{2}(\alpha_2)}\sqrt{\frac{2\pi }{%
n|y_{2}^{\prime \prime }(\alpha_2)|}}.
\end{equation*}
By a similar argument, we also have that $s_u \sim \widetilde{s_u} \sim
\zeta e^{ny_{u}(\alpha_u)}\sqrt{\frac{2\pi }{n|y_{u}^{\prime \prime
}(\alpha_u)|}}$ for $u=3,23$. Since $PP_{A,1} = s_2 + s_3 + s_{23}$, this
expression is dominated by the fastest-growing terms. }

{\normalsize Let $y_{max}(n) := \max_{u = 2, 3, 23} y_u(\alpha_u)$ denote
the maximum of the three exponential rates, which is realized by one, two or
all three of the indices. The expression $PP_{A,1}$ thus grows like $%
\widetilde{PP_{A,1}} := \zeta \rho(n) e^{n y_{max}(n)}$, where the
correction term $\rho(n)$ (defined in \eqref{rhodef}) involves only those
indices $u \in \{2,3,23\}$ for which $y_u(\alpha_u)$ attains the maximum.
This completes the proof.}
\end{proof}